\documentclass[12pt]{article}
\usepackage[usenames]{color}
\usepackage{colortbl}
\usepackage{tikz-cd}
\usepackage{xcolor}
\usepackage[bbgreekl]{mathbbol}

\usepackage{amscd,amsmath, amssymb, fancyhdr, mathbbol,  url, euscript, amssymb}
\usepackage[backref=page]{hyperref}
\usepackage{mathtools}

\hypersetup{
	colorlinks   = true,
	citecolor    = magenta,
	linkcolor    =blue,
	urlcolor     =magenta	
}

%\usepackage[OT2,T1]{fontenc}
%\DeclareSymbolFont{cyrletters}{OT2}{wncyr}{m}{n}
%\DeclareMathSymbol{\RusL}{\mathalpha}{cyrletters}{76}
%\def\L{\RusL}

\numberwithin{equation}{section}
\newcommand{\version}{}

\hyphenation{plu-ri-sub-har-mo-nic}
\def\eqref#1{(\ref{#1})}

\def\1{\sqrt{-1}\:}

\newcommand{\cntrct}                % contraction with a vector field
{\hspace{2pt}\raisebox{1pt}{\text{$\lrcorner$}}\hspace{2pt}}

% Correcting TeX...

\renewcommand{\tilde}{\widetilde}
\renewcommand{\bar}{\overline}
\renewcommand{\phi}{\varphi}
\renewcommand{\epsilon}{\varepsilon}
\renewcommand{\geq}{\geqslant}
\renewcommand{\leq}{\leqslant}

\newcommand{\be}{\begin{equation}}
\newcommand{\bea}{\begin{eqnarray}}
\newcommand{\eea}{\end{eqnarray}} 
\newcommand{\ee}{\end{equation}}

% Operatornames

\newcommand{\Tr}{\operatorname{Tr}}

\newcommand{\vp}{\varphi}
\newcommand{\cI}{\mathcal{I}}
\renewcommand{\i}{\infty}
\newcommand{\cH}{\mathcal{H}}
\newcommand{\ve}{\varepsilon}
\newcommand{\p}{\partial}
\renewcommand{\d}{\delta}
\newcommand{\D}{\Delta}
\newcommand{\na}{\nabla}
\newcommand{\cO}{\mathcal{O}}

\renewcommand{\b}{\beta}

\renewcommand{\o}{\omega}
\renewcommand{\a}{\alpha}
%%%%%%%%%%%%%%%%%%%%%%%%%%%%%%%%%%%%%%%%%%%%%%%%%%%%%%%%%%%%%%%%%
% Lemma, sublemma, corollary, proposition, theorem,             %
% definition,example defined there:                             %
%%%%%%%%%%%%%%%%%%%%%%%%%%%%%%%%%%%%%%%%%%%%%%%%%%%%%%%%%%%%%%%%%

\newcounter{Mycounter}[section]
\newcounter{lemma}[section]
\setcounter{lemma}{0}

\newcounter{claim}[section]
\setcounter{claim}{0}

\newcounter{sublemma}[section]
\setcounter{sublemma}{0}

\newcounter{corollary}[section]
\setcounter{corollary}{0}

\newcounter{theorem}[section]
\setcounter{theorem}{0}

\newcounter{conjecture}[section]
\setcounter{conjecture}{0}

\newcounter{proposition}[section]
\setcounter{proposition}{0}

\newcounter{definition}[section]
\setcounter{definition}{0}

\newcounter{example}[section]
\setcounter{example}{0}

\newcounter{remark}[section]
\setcounter{remark}{0}

\newcounter{problem}[section]
\setcounter{problem}{0}

\newcounter{question}[section]
\setcounter{question}{0}

\makeatletter

\setlength{\headheight}{15pt}
\pagestyle{fancy}  \cfoot{-- \thepage \ -- } \rfoot{\tiny \sc\version}

\linespread{1.1}

\@addtoreset{equation}{section} \@addtoreset{footnote}{section}
\makeatother

\begin{document}
\title{Convergence of the inverse Monge-Ampere flow and Nadel multiplier ideal sheaves.\\ {\small Nikita Klemyatin}}
\date{}
\maketitle

\begin{abstract}
	We generalize the inverse Monge-Ampere flow, which was introduced in \cite{CHT17} by Collins, Hisamoto and Takahashi, and provide conditions that guarantee the convergence of the flow without a priori assumption that $X$ has a K\"ahler-Einstein metric. We also show that if the underlying manifold does not admit K\"ahler-Einstein metric, then the flow develops Nadel multiplier ideal sheaves. In addition, we establish the linear lower bound for $\inf_X\vp$, and the theorem of Darvas and He for the inverse Monge-Ampere flow.
	{\footnotesize }
\end{abstract}

%\tableofcontents

\section{Introduction}

\hfill

The search for K\"ahler-Einstein metrics in K\"ahler geometry goes back to the famous work of Calabi \cite{Calabi}, and Yau's solution to Calabi conjecture \cite{Yau}. It is well-known that on Fano manifolds there are certain obstructions to the existence of K\"ahler-Einstein metrics.

Suppose that $(X,\o)$ is a compact K\"ahler manifold, $\b$ is a closed semipositive $(1,1)$-form, and $[\o] = c_1(X) - [\b]$. Let $\cH = \{\vp \in C^\i ~|~ \o + \sqrt{-1}\p \bar{\p} \vp > 0 \}$. For a given metric $\o_\vp$ within the cohomology class $[\o]$ we have the following identity:
\bea
\textrm{Ric}(\o_\vp) = \o_\vp + \b + \sqrt{-1}\p \bar{\p}\rho,
\eea
where $\rho = \rho_\vp$ is a function (usually called the Ricci potential), that is defined up to a constant. The identity above is equivalent to a Monge-Ampere type identity:
\bea
e^\rho \o^n_\vp = e^{\rho_0 - \vp + c_\o}\o^n_0.
\eea

Here $c_\o$ is a normalizing constant.

One could try to find $\vp$, such that the corresponding $\rho$ equals zero. In other words, one may try to find a function $\vp$, such that
\bea
\textrm{Ric}(\o_\vp) = \o_\vp + \b.
\eea

Such metric $\o_\vp$ (if it exists) is called the {\it twisted K\"ahler-Einstein metric}. They satisfy the following Monge-Ampere equation:
\bea
\o^n_\vp = e^{\rho_0 - \vp + c_\o}\o^n_0.
\eea

It is well-known that this equation does not always admit a smooth solution.

There is a great interest in searching for such metrics, with $\b$ being either a semipositive form (see \cite{CS12, DatarSze}), or a positive current (see \cite{BBEGZ11, BBJ15, GP16, Z2021, ZZ21, RossSze} and the references therein). 

We consider the inverse Monge-Ampere flow, introduced in \cite{CHT17} for the case $\b = 0$. The inverse Monge-Ampere flow is a gradient flow for the functional $\mathcal{F}(\vp)$ (see \ref{FFunctional} below for the definition). On the level of potentials, it can be written as follows:
\bea
\dot{\vp} = 1 - e^\rho = 1 - {\o^n_0 \over \o^n_\vp}e^{-\vp + \rho + c(t)}.
\eea

Here $\rho$ is the Ricci potential of the evolving metric $\o_\vp$, such that ${1 \over V}\int_X e^\rho \o^n_\vp = 1$, and $c(t) = - \log \big({1 \over V}\int_X e^{\rho_0 -\vp} \o^n_0\big)$ is a normalizing constant. Since the $(1,1)$-form $\beta$ does not affect the equation on the level of potentials, we will call it the inverse Monge-Ampere flow without, regardless of the presence of $\beta$.

It was shown in \cite[Proposition 2.7]{CHT17} that the uniform equivalence of evolving metrics implies the uniform estimates for the solution. In other words, if there is a constant $A>0$, such that $A^{-1}\o_0 \leq \o_\vp \leq A\o_0$, then all the derivatives of $\vp$ are Holder continious.  This condition implies a sequential convergence, but it is a way too strong assumption, which is hard to establish. We want to find milder conditions, that would guarantee the convergence, at least in the case when $H^0(X, TX) = 0$.

In order to find such conditions, in section \ref{SectionAlphaInvariant} we study the relations between $\a$-invariant and the inverse Monge-Ampere flow. In \ref{SupBoundsForAlphaInv} we show, that if $\a$-invariant is bigger than ${n \over n+1 }$, then one has the upper bound for $\sup_X \vp$. Hence, one could think that this bound should imply the convergence of the flow. The following theorem says that it is true. 

\hfill

\begin{theorem}\label{TheoremOne}
	Assume that $(X,\o_0)$ is a compact K\"ahler manifold. Let $\b \geq 0$ be a semipositive $(1,1)$-form, such that $c_1(X) = [\o_0] + [\b]$. Assume further, that $X$ does not have holomorphic vector fields. 
	Then the following estimates along the inverse Monge-Ampere flow are equivalent and imply the convergence of the flow:
	\begin{itemize}
		\item[(1)] $\sup_X \vp$ is uniformly bounded from above;
		
		\item[(2)] The average ${1 \over V}\int_X \vp \o^n_0$ is uniformly bounded from above;
		
		\item[(3)] $J(\vp)$ and $d_1(0,\vp)$ are uniformly bounded from above;
		
		\item[(4)] $I(\vp)$ is uniformly bounded;
		
		\item[(5)] For some $p>1$ the integral ${1\over V}\int_X e^{-p\vp}\o^n_0$ is uniformly bounded.
	\end{itemize}
	
	The following bounds imply the convergence of the inverse Monge-Ampere flow.
	\begin{itemize}
		\item[(6)] The alpha-invariant $\a(X, \o_0)$ is bigger than ${n \over n+1}$;
		
		\item[(7)] $\textrm{osc}(\vp)$ is uniformly bounded along the flow;
		
		\item[(8)] $\inf_X\vp$ is uniformly bounded below along the flow;
		
		\item[(9)] For some $p > 1$ the distance $d_p(0, \vp)$ is uniformly bounded from above along the flow.
	\end{itemize}
	In particular, if $(X,\o_0)$ does not admit a twisted K\"ahler-Einstein metric, then both $||\vp||_{C^0}$ and ${1 \over V}\int_X \vp \o^n_0$ are unbounded.
\end{theorem}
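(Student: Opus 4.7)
The plan hinges on two structural facts about the inverse Monge--Amp\`ere flow: the Ding-type functional $\mathcal{F}$ is monotone non-increasing, and the Aubin--Mabuchi energy $E$ is exactly conserved, since $\frac{d}{dt}E(\vp) = \frac{1}{V}\int_X (1-e^\rho)\o^n_\vp = 0$ by the normalization $\frac{1}{V}\int_X e^\rho\o^n_\vp = 1$. Conservation of $E$ pins the average of $\vp$: one has $\frac{1}{V}\int_X \vp\,\o^n_0 = J(\vp) + E(\vp_0)$, so (2) and (3) coincide up to a fixed constant.

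The equivalences among (1)--(5) are then assembled from standard pluripotential ingredients. The inequality $\sup_X\vp - \frac{1}{V}\int_X \vp\,\o^n_0 \leq C(X,\o_0)$ for $\o_0$-psh functions yields (1)$\Leftrightarrow$(2); the comparability $J \leq I \leq (n+1)J$ gives (3)$\Leftrightarrow$(4); the Darvas description of $d_1(0,\cdot)$ in terms of $J$ and $E$, coupled with $E$-conservation, handles the $d_1$ part of (3). For (5), observing that the flow solves the Monge--Amp\`ere equation $\o^n_\vp = e^{-\vp + \rho_0 + c(t)}\o^n_0$ pointwise in $t$, Ko{\l}odziej's $L^\infty$ estimate, combined with the two-sided bound on $c(t)$ produced by monotonicity of $\mathcal{F}$ and (5), turns an $L^p$ bound on $e^{-p\vp}$ into a $C^0$ bound on $\vp$, yielding (5)$\Rightarrow$(1); the reverse implication follows from Skoda/Tian applied to $\vp - \sup_X\vp$ together with the lower bound $\sup_X\vp \geq E(\vp_0)$ forced by $E$-conservation and $J \geq 0$.

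The conditions (6)--(9) reduce to (1) one by one. (6) is precisely \ref{SupBoundsForAlphaInv}. (7) gives a uniform $J$-bound because $J$ is invariant under $\vp \mapsto \vp + c$, landing in (3). For (9), Darvas-type estimates control $J$ and $I$ by $d_p$. The subtlest case is (8): using the mixed-form identity $(n+1)E(\vp) = \sum_{j=0}^n \frac{1}{V}\int_X \vp\,\o^j_\vp\wedge\o^{n-j}_0$ and the conservation $E(\vp) = E(\vp_0)$, the bound $\vp \geq -M$ forces each mixed integral to be $\geq -M$, so $\frac{1}{V}\int_X \vp\,\o^n_0 \leq (n+1)E(\vp_0) + nM$, which is (2).

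The chief obstacle is the implication (1) $\Rightarrow$ convergence. With $\sup_X\vp \leq C$ and $E(\vp) \equiv E(\vp_0)$, we control $J(\vp)$ from above, and monotonicity of $\mathcal{F}$ then bounds $-\log\frac{1}{V}\int_X e^{\rho_0 - \vp}\o^n_0$ from above, so $c(t)$ is two-sided bounded. Ko{\l}odziej's theorem promotes the one-sided bound on $\vp$ to a uniform $C^0$ bound via the Monge--Amp\`ere equation; parabolic Evans--Krylov and Schauder bootstrap then yield uniform $C^{k,\alpha}$ bounds for all $k$. Any $C^\infty$-subsequential limit $\vp_\infty$ satisfies $\dot\vp_\infty = 0$ because the gradient-flow identity gives $\int_0^\infty \|\dot\vp\|_{L^2(\o^n_\vp)}^2\,dt < \infty$. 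Hence $\vp_\infty$ induces a twisted K\"ahler--Einstein metric, and the hypothesis $H^0(X,TX) = 0$ kills the automorphism ambiguity, promoting subsequential convergence to full convergence. The last assertion of the theorem is then the contrapositive: if no twisted K\"ahler--Einstein metric exists, conditions (1) and (2) must fail along the flow.
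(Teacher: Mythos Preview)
Your overall architecture for the equivalences $(1)\Leftrightarrow(2)\Leftrightarrow(3)\Leftrightarrow(4)$ is sound and matches the paper, and your mixed-form argument for $(8)\Rightarrow(2)$ is a clean alternative to the paper's route (which instead observes $(8)\Rightarrow(5)$ trivially and then invokes $(5)\Rightarrow$ convergence). However, the core of your argument --- both the implication $(5)\Rightarrow(1)$ and the implication $(1)\Rightarrow$ convergence --- rests on a mis-stated Monge--Amp\`ere identity, and this is a genuine gap.

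You write that the flow solves $\o^n_\vp = e^{-\vp + \rho_0 + c(t)}\o^n_0$ pointwise in $t$. That is not correct: the identity along the flow is $e^{\rho}\o^n_\vp = e^{-\vp + \rho_0 + c(t)}\o^n_0$, i.e.\ $\o^n_\vp = e^{-\rho - \vp + \rho_0 + c(t)}\o^n_0$, with an extra factor $e^{-\rho}$ on the right. To apply Ko\l{}odziej you would need $e^{-\rho}$ (equivalently, an upper bound on $-\rho$) controlled uniformly in $t$, but along this flow there is no Perelman-type estimate: the paper stresses that $\rho$ may be unbounded on an unstable Fano, and the only a priori bound is $\rho \geq -t + c(t) + A_2$, which deteriorates linearly. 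So the Ko\l{}odziej step cannot produce a uniform $C^0$ bound, and your subsequent bootstrap to $C^{k,\alpha}$ and smooth subsequential convergence does not get off the ground. Relatedly, your $(1)\Rightarrow(5)$ via the $\alpha$-invariant only yields $\int_X e^{-\alpha\vp}\o^n_0$ bounded for $\alpha < \alpha(X,\o_0)$, which need not exceed $1$.

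The paper circumvents this by working entirely in pluripotential/energy terms. For $(5)\Rightarrow$ convergence, a Jensen-inequality computation (applied to $\int e^{-p\vp}\o^n_0 = \int e^{-p\vp}\frac{\o^n_0}{\o^n_\vp}\o^n_\vp$) shows that a uniform bound on $\int e^{-p\vp}\o^n_0$ forces the entropy term to dominate $p(I-J)(\vp)$, whence the Mabuchi functional is coercive with slope $p-1>0$; coercivity then bounds $J$, landing in $(3)$. For $(1)\Rightarrow$ convergence, the paper extracts an $L^1$-limit $\vp_\infty$ of a subsequence, uses the bound $-\frac{1}{V}\int\psi_j\,\o^n_j \leq C$ together with Guedj--Zeriahi to show $\vp_\infty$ has zero Lelong numbers, invokes Skoda to get $e^{-p\vp_\infty}\in L^1$ for \emph{all} $p$, and then uses effective semicontinuity and the strong-compactness criterion of Berman--Boucksom--Eyssidieux--Guedj--Zeriahi to upgrade to strong convergence and a smooth twisted KE limit; full convergence is then Berndtsson's twisted Bando--Mabuchi uniqueness. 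None of this touches Ko\l{}odziej or requires control of $\rho$.
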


\hfill

These bounds are known for the case of K\"ahler Ricci flow, see \cite{CW1, PSS2006, PSSW2007,Rub2009}. Again, as in \cite{PSS2006}, the bound on the number $p$ in (5) is sharp, and $p=1$ does not imply the convergence of the flow. However, the upper bound in the borderline case $p=1$ implies semistability of $X$.

It is worth mentioning that in \cite{Rub2009} the similar theorem about the multiplier ideal sheaves was proved using the Perelman estimates and the uniform Sobolev inequality along the K\"ahler-Ricci flow. For the twisted Monge-Ampere flow, there is no known uniform Sobolev inequality. Moreover, there is no analog of Perelman's estimates. It was shown in \cite[Section 6]{CHT17}, that the Ricci potential may be unbounded along the flow if the underlying Fano manifold is unstable. Thus, we cannot rely on the methods, that worked in the case of the K\"ahler-Ricci flow. Instead, we use the pluripotential theory to obtain the above theorem.

As a result, the claims for $\inf_X \vp$ and
for $\sup_X \vp$ in \ref{TheoremOne} are obtained in very different ways. In particular, the proof of this theorem does not show any direct connection (like Harnack inequality in the case of K\"ahler-Ricci flow) between $\inf_X \vp$ and $\sup_X \vp$. Below, in \ref{TheoremThree}, we will address this particular issue. 

Next, as an application of \ref{TheoremOne} and the results from section \ref{SectionAlphaInvariant}, we show that if manifold does not admit a twisted K\"ahler-Einstein metric, then the inverse Monge-Ampere flow produces a Nadel multiplier ideal sheaf.

\hfill

\begin{theorem}\label{TheoremTwo}
	Let $(X,\o_0)$ be a compact K\"ahler manifold, that satisfies all the assumptions of \ref{TheoremOne}, and does not admit a twisted K\"ahler-Einstein metric. Let $\a >{n \over n+1}$. Then there is a sequence of times $\{t_j\}$, and a sequence of solutions $\{\vp_{t_j}\}$ to the inverse Monge-Ampere flow, such that $\psi_j:= \vp_j - {1 \over V}\int_X \vp_j \o^n_{\vp_j}$ converge in $L^1(X, \o^n_0)$ to $\psi_\i$, and $\cI(\a\psi_\i)$ is a proper multiplier ideal sheaf, and
	
	\bea
	H^q(X,-\lfloor \a \rfloor K_X \otimes \cI(\a\psi_\i)) = 0, ~~ \forall q\geq 1.
	\eea
\end{theorem}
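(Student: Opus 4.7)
The plan is to leverage the failure of convergence given by Theorem \ref{TheoremOne} to extract an $\omega_0$-psh limit whose multiplier ideal sheaf is nontrivial, and then invoke Nadel vanishing. Since $X$ admits no twisted K\"ahler--Einstein metric, Theorem \ref{TheoremOne} asserts that none of (1)--(9) hold along the flow; in particular $\sup_X\varphi_t$ is unbounded above, so one can select a sequence $t_j\to\infty$ along which $\sup_X\varphi_{t_j}\to\infty$ and, by the failure of (5), $\frac{1}{V}\int_X e^{-p\varphi_{t_j}}\omega_0^n\to\infty$ for any fixed $p>1$.

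Normalize by setting $\psi_j:=\varphi_{t_j}-c_j$ with $c_j=\frac{1}{V}\int_X \varphi_{t_j}\,\omega_{\varphi_{t_j}}^n$, so that each $\psi_j$ is $\omega_0$-plurisubharmonic with $\int_X \psi_j\,\omega_{\psi_j}^n=0$. Using the standard inequalities between the Aubin--Yau energy $E$ and the Aubin functionals $I, J$ that are developed in Section \ref{SectionAlphaInvariant}, together with the $\omega_{\psi_j}^n$-normalization, I would establish a uniform bound $\sup_X\psi_j\leq C$. Hartogs-type $L^1$-compactness of $\omega_0$-psh functions with bounded supremum then yields, after passing to a subsequence, a limit $\psi_\i\in\mathrm{PSH}(X,\omega_0)$, $\psi_\i\not\equiv-\i$, with $\psi_j\to\psi_\i$ in $L^1(X,\omega_0^n)$.

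To conclude that $\cI(\alpha\psi_\i)$ is proper, I would use the quantitative estimate underlying \ref{SupBoundsForAlphaInv}: the proof there provides, for any $\alpha>n/(n+1)$, a bound of the form $\sup_X\varphi\leq C+C(\alpha)\log\bigl(\tfrac{1}{V}\int_X e^{-\alpha\psi}\omega_0^n\bigr)$. Since $\sup_X\varphi_{t_j}\to\infty$ while $c_j$ remains controlled (using the Monge--Amp\`ere identity $e^{\rho}\omega_{\varphi_{t_j}}^n=e^{\rho_0-\varphi_{t_j}+c(t_j)}\omega_0^n$, which keeps $\int_X e^{\rho_0-\varphi_{t_j}}\omega_0^n$ of order $V$), this forces $\frac{1}{V}\int_X e^{-\alpha\psi_j}\omega_0^n\to\infty$. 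Combining this with $L^1$-convergence $\psi_j\to\psi_\i$ and the Demailly--Guan--Zhou strong openness/semicontinuity of multiplier ideal sheaves, one rules out $\cI(\alpha\psi_\i)=\cO_X$. Nontriviality is immediate: $\psi_\i\not\equiv-\i$ has finite Lelong numbers, so $\cI(\alpha\psi_\i)$ contains sufficiently high powers of each maximal ideal.

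Finally, Nadel's vanishing theorem applied to the singular weight $\alpha\psi_\i$ on the positive line bundle $-(\lfloor\alpha\rfloor+1)K_X$ closes the argument: the curvature current equals $(\lfloor\alpha\rfloor+1)(\omega_0+\beta)+i\partial\bar\partial(\alpha\psi_\i)\geq(\lfloor\alpha\rfloor+1-\alpha)\omega_0>0$, since $\alpha\psi_\i$ is $\alpha\omega_0$-psh, $\beta\geq 0$, and $\lfloor\alpha\rfloor+1>\alpha$. Strict positivity then yields $H^q(X,-\lfloor\alpha\rfloor K_X\otimes\cI(\alpha\psi_\i))=0$ for all $q\geq 1$. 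The main obstacle is the third step: converting the growth of $\sup_X\varphi_{t_j}$ into $L^\alpha$ non-integrability of the limit $\psi_\i$ in a way that interfaces cleanly with strong openness. This is exactly where the hypothesis $\alpha>n/(n+1)$ enters, via the $\alpha$-invariant estimates of Section \ref{SectionAlphaInvariant}, and requires careful bookkeeping of the normalization constants $c_j$ along the flow.
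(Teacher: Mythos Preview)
Your overall strategy matches the paper's: invoke \ref{TheoremOne} to get $\sup_X\vp_{t_j}\to\infty$, use the $\alpha$-invariant inequality (\ref{SupBoundsForAlphaInv}) to force $\int_X e^{-\alpha(\vp_j-\sup_X\vp_j)}\o_0^n\to\infty$, extract an $L^1$ limit by weak compactness of normalized $\o_0$-psh functions, apply the effective semicontinuity theorem of Demailly--Koll\'ar to carry the non-integrability to $\psi_\i$, and finish with Nadel vanishing. Your Nadel step, with $L=-(\lfloor\alpha\rfloor+1)K_X$ and the curvature count $(\lfloor\alpha\rfloor+1-\alpha)\o_0>0$, is correct.

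The gap is in your compactness step. With the normalization $c_j=\frac{1}{V}\int_X\vp_j\,\o_{\vp_j}^n$ taken from the statement, the claimed bound $\sup_X\psi_j\leq C$ is \emph{false}: since $\textrm{E}(\vp)$ is constant along the flow, one computes
\[
\sup_X\psi_j=\Big(\sup_X\vp_j-\tfrac{1}{V}\!\int_X\vp_j\,\o_0^n\Big)+I(\vp_j)=I(\vp_j)+O(1),
\]
and by item (4) of \ref{TheoremOne} the functional $I(\vp_j)$ blows up precisely because the flow diverges. Hence $\sup_X\psi_j\to\infty$ and Hartogs-type compactness does not apply. Relatedly, your parenthetical ``$c_j$ remains controlled'' conflates $c_j$ with the flow constant $c(t_j)=-\log\big(\frac{1}{V}\int_X e^{\rho_0-\vp_j}\o_0^n\big)$; these are different quantities, and in any case there is no reason for either to stay bounded when the flow diverges. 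The paper's own proof normalizes instead by the $\o_0^n$-average, taking $\psi_j=\vp_j-\frac{1}{V}\int_X\vp_j\,\o_0^n$; this makes $0\leq\sup_X\psi_j\leq C(X,\o_0)$ automatic, so compactness holds and $\psi_j$ differs from $\vp_j-\sup_X\vp_j$ by a uniformly bounded constant, which is exactly what is needed to transfer the blowup from \ref{SupBoundsForAlphaInv}. The $\o_{\vp_j}^n$-normalization in the theorem statement appears to be a slip; once you switch to the $\o_0^n$-normalization (or equivalently subtract $\sup_X\vp_j$), your outline goes through essentially as in the paper.
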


\hfill

Thus, \ref{TheoremOne} and \ref{TheoremTwo} also recover the results of \cite{PSS2006} and \cite{Rub2009}. 

Finally, we address the question, asked by \cite[Section 5.2]{Xia2019} about the lower linear bound on $\inf \vp$. Originally, the estimate for $\inf_X \vp$ in finite time was obtained in \cite{CHT17} by using the Kolodziej $L^\i$ estimate. However, this estimate is implicit, and it does not give any actual qualitative bounds. We answer this question under the additional assumption that $\o^n_\vp$ is exponentially bounded along the flow.

\hfill

\begin{theorem}\label{TheoremThree}
	Let $\vp$ be a solution to the Inverse Monge-Ampere flow.
	Then there is a constant $M > 0$, such that 
	\bea
	||\vp||_{C^0} \leq M(t + 1)
	\eea
	along the inverse Monge-Ampere flow. 
\end{theorem}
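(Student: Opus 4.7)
The plan is to split the bound $\|\vp\|_{C^0} \leq M(t+1)$ into a linear upper bound on $\sup_X \vp$ and a linear lower bound on $\inf_X \vp$, attacking each by Hamilton's trick applied to the flow $\dot\vp = 1 - e^\rho$. The upper bound comes almost for free from nonnegativity of $e^\rho$; the lower bound is where the hypothesis that $\omega_\vp^n$ is exponentially bounded (stated in the paragraph preceding the theorem) must enter.

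For the upper bound, set $u(t) := \sup_X \vp(\cdot, t)$ and apply Hamilton's trick at a maximum point $x_t$. Using only $e^\rho \geq 0$,
\[
\dot u(t) \;=\; 1 - e^{\rho(x_t, t)} \;\leq\; 1,
\]
so integration yields $\sup_X \vp(t) \leq \sup_X \vp(0) + t$. This step uses no assumption on $\omega_\vp^n$.

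For the lower bound, set $\ell(t) := \inf_X \vp(\cdot, t)$, attained at $y_t$. Convexity of $\vp$ at $y_t$ forces $\omega_\vp(y_t) \geq \omega_0(y_t)$, so $\omega_0^n / \omega_\vp^n \leq 1$ at $y_t$, and the flow equation gives
\[
e^{\rho(y_t, t)} \;\leq\; e^{-\ell(t) + \rho_0(y_t) + c(t)}.
\]
The normalizing constant is estimated via $c(t) = -\log\big(\tfrac{1}{V}\int_X e^{\rho_0 - \vp}\omega_0^n\big) \leq \sup_X \vp(t) + C_0 \leq t + C_1$ by the previous step, which yields the differential inequality
\[
\dot\ell(t) \;\geq\; 1 - e^{\,t - \ell(t) + C_2}.
\]
A naive ODE comparison would only give an exponentially decreasing lower bound on $\ell$; to upgrade this to linear, I feed in the exponential boundedness of $\omega_\vp^n$: combining $\omega_\vp^n \leq e^{At+B}\omega_0^n$ with a Kolodziej-type $L^\infty$-estimate (with logarithmic dependence of the constant on the density) produces a linear oscillation bound $\mathrm{osc}(\vp)(t) \leq C_3(t+1)$. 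Together with $\sup_X \vp \leq t + C$ this gives $-\inf_X \vp \leq M(t+1)$, as desired.

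The main obstacle is precisely this last step: the pointwise differential inequality for $\ell$ obtained from the maximum principle only gives exponential decay, so one needs genuine pluripotential-theoretic input through a sharp Kolodziej-type estimate to translate the exponential boundedness of the density into a linear bound on the oscillation, and then reconcile that bound with the pointwise ODE information to close the argument at a linear rate.
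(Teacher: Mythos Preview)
Your upper bound via $\dot\vp\leq 1$ is exactly what the paper uses. The trouble is entirely in the lower bound, and there is a genuine gap.

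The crucial step you rely on is a ``Kolodziej-type $L^\infty$-estimate with logarithmic dependence of the constant on the density'': from $\omega_\vp^n\leq e^{At+B}\omega_0^n$ you want $\mathrm{osc}(\vp)\leq C(t+1)$. This is not a standard result, and you give no argument for it. In the usual Kolodziej theory the oscillation bound depends on $\|f\|_{L^p}$ through the capacity comparison and scales like a \emph{power} of that norm, not its logarithm; feeding in $\|f\|_{L^p}\sim e^{At}$ then only recovers an exponential bound on $\mathrm{osc}(\vp)$, which is no better than your discarded ODE attempt. Entropy-based estimates \`a la Guo--Phong--Tong do not obviously help either, since an $L^\infty$ bound $f\leq e^{At}$ gives entropy of order $t$ but the resulting oscillation bound is again a power of the entropy. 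So the step on which your whole argument hinges is asserted, not proved, and is likely false in the stated form.

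The paper takes a completely different route and, notably, \emph{does not use the exponential volume hypothesis at all} (the sentence in the introduction appears to be a leftover). The actual mechanism is: the $C^2$-estimate from \cite{CHT17} gives $\log\Tr_{\omega_0}\omega_\vp\leq C+A(\vp-\inf_X\vp)+t-\inf_X\vp$, and one runs Moser iteration on $u=e^{-B(\vp-\sup_X\vp)}$ with $B>A+1$, using this $C^2$-bound to control the gradient term in the Sobolev inequality. The iteration yields $\|u\|_{C^0}^{1-\gamma}\leq Ce^{nt}\|u\|_{L^1}$ for some $\gamma<1$. The $L^1$ norm is then handled not by the $\alpha$-invariant but by the Trudinger inequality of Guo--Phong--Tong, which gives $\log\|u\|_{L^1}\leq C\int_X(-\psi)\,\omega_\psi^n+C$ with $\psi=\vp-\sup_X\vp$; this energy integral is linearly bounded in $t$ because $-\int_X\psi\,\omega_\psi^n\leq (n+1)\sup_X\vp+C\leq (n+1)t+C$ along the flow (using that $\mathrm{E}(\vp)$ is constant). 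Taking logarithms closes the estimate at a linear rate. The point is that the linear input comes from the \emph{flow-specific} control of $\int\psi\,\omega_\psi^n$, not from any general dependence of the Kolodziej constant on the density.
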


\hfill

The converse is also true, and it follows from the known bounds for the inverse Monge-Ampere flow.

Using \ref{TheoremThree}, and the results from section \ref{SectionAlphaInvariant}, we show, how to establish results from \cite{DarvasHe} in our setting.

\hfill

\begin{theorem}\label{TheoremFour}
	Assume that $\vp_t$ is a diverging trajectory of the inverse Monge-Ampere flow, which satisfies the assumptions of \ref{TheoremThree}. Then there exists a curve $\vp^t$, such that for any $p \geq 1$ it is a nontrivial $d_p$ geodesic ray, weakly asymptotic to $\vp_t$. Moreover, the functional $\mathcal{F}(\vp^t)$ is convex and decreasing along the geodesic ray, and the normalized ray $\vp^t - \sup_X (\vp^t - \vp^0)$ converges to  $\vp^\i$, such that ${1 \over V}\int_X e^{-{n \over n+1}\vp^\i}\o^n_0 = +\i$.
\end{theorem}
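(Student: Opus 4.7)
The plan is to follow the Darvas--He scheme \cite{DarvasHe}, substituting pluripotential tools for the Perelman-type estimates unavailable for the inverse Monge-Amp\`ere flow. The linear growth bound of \ref{TheoremThree}, the $d_p$-geometry of the finite-energy classes $\mathcal{E}^p(X,\o_0)$, and the $\a$-invariant estimates of Section \ref{SectionAlphaInvariant} will do the heavy lifting.

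\textbf{Step 1 (Construction of the ray).} For each horizon $T>0$ let $[0,T]\ni s\mapsto \vp^T_s$ be the finite-energy geodesic in $\mathcal{E}^\i(X,\o_0)$ joining $\vp_0$ to $\vp_T$; because both endpoints are bounded, this curve is simultaneously a $d_p$-geodesic for every $p\geq 1$. Theorem \ref{TheoremThree} together with the standard domination $d_p(0,\vp)\leq C_p\|\vp\|_{C^0}$ yields
\be
	d_p(\vp^T_s,\vp^T_{s'})=\frac{|s-s'|}{T}\,d_p(\vp_0,\vp_T)\leq C_p\,M\,|s-s'|.
\ee
Compactness of $d_p$-bounded sets in $\mathcal{E}^p$ (Darvas, Berman--Darvas--Lu), combined with a diagonal procedure as $T_k\to\i$, then yields a limit curve $t\mapsto\vp^t$ which is a $d_p$-geodesic ray for every $p\geq 1$.

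\textbf{Step 2 (Nontriviality, weak asymptoticity, and behavior of $\mathcal{F}$).} Divergence of $\vp_t$ forces, by the equivalences in \ref{TheoremOne}, $d_1(\vp_0,\vp_t)\to\i$; combined with the upper bound $d_1(\vp_0,\vp_t)\leq C(t+1)$, the quotient $d_1(\vp_0,\vp_t)/t$ stays bounded below by a positive constant along a subsequence, so the $d_1$-speed of the limit ray is strictly positive and $\vp^t$ is nontrivial. Weak asymptoticity passes to the limit from the trivial equality $\vp^T_T=\vp_T$ using the uniform Lipschitz control of Step 1. Convexity of $\mathcal{F}$ along $d_1$-geodesic rays in $\mathcal{E}^1$ is a theorem of Berndtsson and Berman--Berndtsson; monotonicity of $t\mapsto\mathcal{F}(\vp^t)$ transfers from the gradient flow $\vp_t$ to the ray by weak asymptoticity together with $d_1$-lower semicontinuity of $\mathcal{F}$.

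\textbf{Step 3 (The singular limit).} Set $u^t:=\vp^t-\sup_X(\vp^t-\vp^0)$. A direct inspection gives $\inf_X\vp^0\leq\sup_X u^t\leq\sup_X\vp^0$, so $\{u^t\}$ is a sup-bounded family in $\mathrm{PSH}(X,\o_0)$, and the standard $L^1$-compactness of $\o_0$-psh functions produces an $L^1$-limit $\vp^\i\in\mathrm{PSH}(X,\o_0)$. The assertion
\be
	\frac{1}{V}\int_X e^{-\frac{n}{n+1}\vp^\i}\o^n_0=+\i
\ee
is proved by contradiction. If the integral were finite, Demailly--Koll\'ar semicontinuity (Hartogs' lemma for $L^1$-convergent psh sequences) would yield a uniform bound on $\int_X e^{-\frac{n}{n+1}u^t}\o^n_0$; combined with the translation-invariance of $\mathcal{F}$ and the $\a$-invariant coercivity extracted in Section \ref{SectionAlphaInvariant}, this would force $\mathcal{F}(\vp^t)$ to be bounded below along the ray, contradicting $\mathcal{F}(\vp^t)\to-\i$, which itself follows from $J(\vp^t)\to\i$ and the equivalences in \ref{TheoremOne}. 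The chief obstacle is the construction of Step 1 together with the weak asymptoticity of Step 2: without Perelman-type control or a uniform Sobolev inequality, one has no direct parabolic machinery, and every limit passage must be routed through the pluripotential $d_p$-completeness of Darvas' framework.
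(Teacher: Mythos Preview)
Your Steps 1 and 2 are broadly in line with the paper's argument: both routes build finite-energy geodesic segments from $\vp_0$ to $\vp_{t_j}$, exploit the $C^0$-linear growth of \ref{TheoremThree} to get uniform bounds on the slopes, and extract a limiting ray via the compactness machinery of \cite{DarvasHe,DarvasGeodRays}. The paper is a bit more specific (it invokes \cite[Theorem 3.4]{DarvasGeodRays} for the $C^0$ slope bounds, the entropy bound from \cite{His2019}, and \cite[Theorem 2.4]{DarvasHe} for the limit; it also reparametrizes by $d_2$-arc-length to secure nontriviality), but these are refinements rather than different ideas.

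Step 3, however, contains a genuine gap. Your contradiction hinges on the claim that a uniform bound on $\int_X e^{-\frac{n}{n+1}u^t}\o^n_0$, together with ``the $\a$-invariant coercivity extracted in Section \ref{SectionAlphaInvariant}'', would force $\mathcal{F}(\vp^t)$ to be bounded below. But the estimate in \ref{SupBoundsForAlphaInv} reads
\[
((n+1)\a-n)\sup_X\vp\ \leq\ \log\Bigg(\frac{1}{V}\int_X e^{-\a(\vp-\sup_X\vp)}\o^n_0\Bigg)+C,
\]
and at the borderline $\a=\frac{n}{n+1}$ the coefficient on the left vanishes: the inequality is vacuous and yields no coercivity whatsoever. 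There is no way to parlay a bound on $\int e^{-\frac{n}{n+1}u^t}$ into a lower bound for $\mathcal{F}$, and your assertion that $\mathcal{F}(\vp^t)\to-\i$ (which you deduce from ``$J(\vp^t)\to\i$ and \ref{TheoremOne}'') is itself not established---\ref{TheoremOne} concerns the flow, not the ray, and unbounded $J$ does not by itself force $\mathcal{F}\to-\i$.

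The paper avoids this trap by working strictly with exponents $\a\in\big(\frac{n}{n+1},1\big)$: it uses Berndtsson's convexity of $t\mapsto-\log\int_X e^{-\a\vp^t+\rho_0}\o^n_0$ along geodesics together with \ref{SupBoundsForAlphaInv} at the endpoints to show $\int_X e^{-\a u^t}\o^n_0\to+\i$ for every such $\a$. It also uses that the normalized ray $u^t$ is \emph{decreasing} in $t$ (from \cite{DarvasGeodRays,DarvasHe}), so $\vp^\i$ is a genuine pointwise limit, not just a subsequential $L^1$-limit; monotone convergence then gives $\int_X e^{-\a\vp^\i}\o^n_0=+\i$ for all $\a>\frac{n}{n+1}$. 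Only at the very last step does one reach the borderline exponent, and this requires the openness conjecture of Demailly, proved by Guan--Zhou \cite{GZhou} and Berndtsson \cite{Brn2}: were $\int_X e^{-\frac{n}{n+1}\vp^\i}\o^n_0$ finite, openness would give finiteness for some $\a>\frac{n}{n+1}$, contradicting what was just shown. You cannot get to $\a=\frac{n}{n+1}$ without invoking this result (or something of equivalent strength).
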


\hfill

The organization of the paper is as follows. In section \ref{SectionPrelim} we collect the basic information about the inverse Monge-Ampere flow and the behavior of various functionals along the flow. In \ref{SectionAlphaInvariant} we establish the relationships between the inverse Monge-Ampere flow and the alpha-invariant. In the absence of Perelman estimates for the inverse Monge-Ampere flow, we rely on the monotonicity of the Mabuchi energy $\mathcal{M}(\vp)$. This allows us to prove \ref{SupBoundsForAlphaInv}. In section \ref{SectionConvergence} we prove \ref{TheoremOne}. In section \ref{SectionNadel} we prove \ref{TheoremTwo}. Finally, in section \ref{SectionSuffering} we obtain \ref{TheoremThree} and \ref{TheoremFour}, and discuss its application.

\textbf{Acknoledgements.} The author is grateful to Professor D.H.Phong for his encouragement and support. I would like to thank Yulia Gorginyan and Chuwen Wang for 
pointing out several typos in the earlier versions of this text.

\section{The Inverse Monge-Ampere flow.}\label{SectionPrelim}
%\subsection{Basic estimates along the flow.}\label{BasicPrelim}

\hfill

Consider a compact K\"ahler manifold $(X,\o_0)$, and a closed semipositive form $\b$, such that $[\o_0] + [\b]= c_1(X)$. Let $\cH = \{\vp \in C^\i(X) ~|~ \o_0 + \sqrt{-1}\p \bar{\p} \vp > 0 \}$,and $\rho=\rho_\vp$ is the Ricci potential of the metric $\o_\vp$, i.e. 
\be
\textrm{Ric}(\o_\vp) = \o_\vp + \b + \sqrt{-1}\p \bar{\p} \rho, ~~~~
{1 \over V} \int_X e^{\rho} \o^n_\vp  = 1, ~~~~ V = \int_X \o^n_0.
\ee

We consider the following flow:
\bea\label{TheInverseMA}
\dot{\vp} = 1 - e^{\rho}
\eea

or, writing it in terms of evolution of the metric, 
\bea
\dot{g}_{j \bar k} = -\p_j \p_{\bar k} (e^\rho) 
= - \rho_{j \bar k} e^\rho -  \rho_j \rho_{\bar k} e^\rho
=-e^{\rho}(R_{j \bar k} - g_{j \bar k} - \b_{j \bar k} + \rho_j \rho_{\bar k}). 
\eea

We may rewrite the flow as parabolic MA equation, similar to an equation, introduced by Krylov \cite{Krylov76}:

\bea
\dot{\vp} = 1 - {\o^n_0  \over \o^n_\vp} e^{ -\vp + \rho_0 + c(t)},  ~~~c(t) = -\log \Bigg({1 \over V} \int_X e^{-\vp + \rho_0}\o_0^n \Bigg). 
\eea

Here we used the fact that the solution of the inverse MA flow satisfies the following identity:
\bea\label{RicciPotentialIdentity}
e^\rho \o^n_\vp = e^{-\vp + \rho_0 + c(t)}\o^n_0.
\eea

Below we collect the basic estimates, which are known from \cite{CHT17} in the case $\beta = 0$.

\hfill

\begin{proposition}\label{KnownEstimates}
	Along the inverse Monge-Ampere flow with $\beta \geq 0$ the following estimates hold:
	\begin{enumerate}
		\item $ \vp \leq t + A_1$;
		
		\item $ \rho \geq - t + c(t) + A_2$;
		
		\item $c(0) + t\dot{c}(0) \leq c(t) \leq \sup_X \vp - \inf_X \rho_0$.
	\end{enumerate}
	Here $A_1$ and $A_2$ are uniform constants.
\end{proposition}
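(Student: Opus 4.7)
The plan is to handle (1) and the upper bound in (3) as direct pointwise consequences of the flow equation and the Monge--Amp\`ere identity \eqref{RicciPotentialIdentity}, to obtain (2) from a parabolic maximum principle applied to the evolution of $\rho$, and to reduce the lower bound in (3) to the convexity of $t \mapsto c(t)$.

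For (1), since $\dot{\vp} = 1 - e^\rho \leq 1$, integrating in time gives $\vp(t) \leq \sup_X\vp(0) + t$, which is (1) with $A_1 = \sup_X\vp(0)$. The upper bound in (3) follows from the pointwise estimate $e^{-\vp + \rho_0} \geq e^{-\sup_X\vp + \inf_X\rho_0}$, which after integrating over $X$ and taking logarithms yields $c(t) \leq \sup_X\vp - \inf_X\rho_0$. For (2), I would take logarithms in \eqref{RicciPotentialIdentity} and differentiate in $t$, using $\tfrac{d}{dt}\log(\o^n_\vp/\o^n_0) = \Delta_\vp\dot{\vp}$, to obtain
\[
\dot\rho \;=\; e^\rho - 1 + \dot c + \Delta_\vp e^\rho.
\]
At a spatial minimum of $\rho$, the function $e^\rho$ is also minimized, so $\Delta_\vp e^\rho \geq 0$; the parabolic minimum principle then yields $\tfrac{d}{dt}(\min_X \rho - c(t)) \geq e^{\min_X \rho} - 1 \geq -1$, which integrates to $\min_X\rho \geq -t + c(t) + A_2$ with $A_2 = \inf_X\rho_0 - c(0)$.

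For the lower bound in (3) the strategy is to prove $\ddot c \geq 0$; integrating this twice gives $c(t) \geq c(0) + t\dot c(0)$. A first differentiation of $Ve^{-c(t)} = \int_X e^{-\vp + \rho_0}\o_0^n$, together with the Ricci-potential identity, yields $\dot c = \tfrac{1}{V}\int_X \dot{\vp}\, e^\rho \o^n_\vp = 1 - \tfrac{1}{V}\int_X e^{2\rho}\o^n_\vp$. A second differentiation, substituting the $\dot\rho$-equation from (2) and integrating by parts, simplifies after cancellations to
\[
\ddot c \;=\; 2\int_X |\nabla_\vp e^\rho|^2 \, d\mu_t \;-\; 2\,\operatorname{Var}_{d\mu_t}(e^\rho),
\]
where $d\mu_t := V^{-1}e^\rho \o^n_\vp$ is a probability measure on $X$. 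Non-negativity of $\ddot c$ is then a weighted Poincar\'e inequality for $d\mu_t$, which I would obtain from a complex Bakry--\'Emery/Lichnerowicz argument: the Bakry--\'Emery Ricci form of $d\mu_t$ equals $\textrm{Ric}(\o_\vp) - \sqrt{-1}\p\bar\p\rho = \o_\vp + \b \geq \o_\vp$, where the last inequality uses precisely $\b \geq 0$, producing a spectral gap of at least $1$ for the drifted Laplacian.

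The main obstacle is the second-derivative computation in (3): beyond the routine bookkeeping in the integrations by parts, it is the cancellations that leave $\ddot c$ as the difference of a Dirichlet energy and a variance which make the Bakry--\'Emery input decisive. This step is also the unique place in the proof where the semipositivity hypothesis $\b \geq 0$ enters in an essential way.
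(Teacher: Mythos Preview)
Your proposal is correct and follows essentially the same route as the paper. Item (1) and the upper bound in (3) are handled identically (the paper uses Jensen's inequality for the latter, but your pointwise bound is equivalent and even simpler); item (2) is exactly the maximum-principle argument of \cite[Lemma 4.2]{CHT17} that the paper cites; and for the lower bound in (3) the paper invokes the convexity of $\mathcal{F}(\vp)$ along the flow, which---since $\textrm{E}(\vp)$ is constant---is the same as your $\ddot c \geq 0$, and is proved (in \ref{FConvex} and the discussion around \eqref{FSecondDer}) by precisely the weighted Lichnerowicz/Bakry--\'Emery spectral-gap argument you outline, with $\b\geq 0$ entering at the same point.
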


\hfill

\begin{proof}
	
	\begin{enumerate}
		\item Follows from the equation of the flow \ref{TheInverseMA};
		
		\item The proof is the same as in \cite[Lemma 4.2]{CHT17};
		
		\item The first inequality follows from the convexity of $\mathcal{F}(\vp)$. It is proved in \ref{FConvex} below. The second inequality follows from the definition of $c(t)$ and the Jensen inequality:
		\bea
		c(t) = -\log \Bigg({1 \over V} \int_X e^{-\vp + \rho_0}\o_0^n \Bigg)
		\leq
		\int_X (\vp - \rho_0)\o_0^n
		\leq 
		\sup_X \vp - \inf_X \rho_0.
		\eea
	\end{enumerate}
\end{proof}

\hfill

Now we need to recall some functionals on $\cH$. First of all, we have the $I$-functional, which is given by 

\bea
I(\vp) = {1 \over V}\int_X \vp\big(\o^n_0 - \o^n_\vp\big).
\eea

The $J$-functional is defined to be

\bea
J(\vp) = {1 \over V} \sum_{j=0}^{n-1} {n-j \over n+1}\int_X \sqrt{-1}\p \vp \wedge \bar{\p}\vp \wedge \o^j_\vp \wedge \o^{n-j-1}_0.
\eea

There is an inequality between these two functionals: ${1 \over n}J \leq I-J \leq nJ$.

The functional $\mathcal{F}(\vp)$ and the energy $\textrm{E}(\vp)$ are given by the following:

\begin{equation}\label{FFunctional}
	\begin{split}
		\mathcal{F}(\vp) & = -\log \Bigg({1 \over V} \int_X e^{-\vp + \rho_0}\o^n_0\Bigg) - \textrm{E}(\vp) = c(t) - \textrm{E}(\vp), 
		\\
		\textrm{E}(\vp) &=  {1 \over V}\int_X \vp \o^n_0 - J(\vp),
		\\
		\d \textrm{E}(\vp) &= {1 \over V} \int_X \d \vp \o^n_\vp.
	\end{split}
\end{equation}

We note that the inverse Monge-Ampere flow is the gradient functional for $\mathcal{F}(\vp)$ (see formula \ref{VariationOfFunctionals} below).  

Finally, the Mabuchi functional is defined by

\be\label{MabuchiFunctional}
\mathcal{M}(\vp):= {1 \over V}\int_X \log {\o^n_\vp \over \o^n_0}\o^n_\vp - {1 \over V} \int_X \rho_0 \o^n_\vp - (I-J)(\vp).
\ee 

It is worth to mention the relation between Mabuchi functional and $\mathcal{F}(\vp)$ along the inverse Monge-Ampere flow.

\hfill

\begin{proposition}\label{RelationFMabuchi}
	Along the inverse Monge-Ampere flow we have 
	\begin{equation*}
		\mathcal{M}(\vp) = \mathcal{F}(\vp) - {1 \over V} \int_X \rho \o^n_\vp.
	\end{equation*}
	In particular, $\mathcal{M}(\vp) \geq \mathcal{F}(\vp)$ along the flow. 
\end{proposition}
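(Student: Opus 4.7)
The plan is to substitute the Monge-Ampere identity \eqref{RicciPotentialIdentity}, namely $e^\rho \omega_\vp^n = e^{-\vp + \rho_0 + c(t)} \omega_0^n$, into the entropy term of the Mabuchi functional $\mathcal{M}(\vp)$, and then reorganize the resulting integrals by using the definitions of $I$, $J$, $E$ and $\mathcal{F}$. The second inequality $\mathcal{M}(\vp) \geq \mathcal{F}(\vp)$ will then follow from Jensen's inequality applied to the normalization $\tfrac{1}{V}\int_X e^\rho \omega_\vp^n = 1$.

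First I would take logarithms of \eqref{RicciPotentialIdentity} to obtain
\begin{equation*}
\log \frac{\omega_\vp^n}{\omega_0^n} = -\vp + \rho_0 + c(t) - \rho,
\end{equation*}
and integrate against $\omega_\vp^n/V$. This replaces the entropy term appearing in the definition \eqref{MabuchiFunctional} of $\mathcal{M}(\vp)$ by
\begin{equation*}
\frac{1}{V}\int_X \log \frac{\omega_\vp^n}{\omega_0^n}\,\omega_\vp^n \;=\; -\frac{1}{V}\int_X \vp\,\omega_\vp^n + \frac{1}{V}\int_X \rho_0\,\omega_\vp^n + c(t) - \frac{1}{V}\int_X \rho\,\omega_\vp^n.
\end{equation*}
The $\rho_0$ term cancels against the corresponding term in \eqref{MabuchiFunctional}.

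Next I would rewrite $-\frac{1}{V}\int_X \vp\,\omega_\vp^n$ using the definition of $I(\vp)$, namely $-\tfrac{1}{V}\int_X \vp\,\omega_\vp^n = I(\vp) - \tfrac{1}{V}\int_X \vp\,\omega_0^n$. Substituting into the expression for $\mathcal{M}(\vp)$, the $I(\vp)$ contribution cancels against the $I$ part of $(I-J)(\vp)$, leaving
\begin{equation*}
\mathcal{M}(\vp) = c(t) - \Bigl(\frac{1}{V}\int_X \vp\,\omega_0^n - J(\vp)\Bigr) - \frac{1}{V}\int_X \rho\,\omega_\vp^n = c(t) - E(\vp) - \frac{1}{V}\int_X \rho\,\omega_\vp^n,
\end{equation*}
which is exactly $\mathcal{F}(\vp) - \tfrac{1}{V}\int_X \rho\,\omega_\vp^n$ by \eqref{FFunctional}.

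For the inequality, I would apply Jensen's inequality to the concave function $\log$ with respect to the probability measure $\omega_\vp^n/V$:
\begin{equation*}
\frac{1}{V}\int_X \rho\,\omega_\vp^n \leq \log\Bigl(\frac{1}{V}\int_X e^\rho\,\omega_\vp^n\Bigr) = \log 1 = 0,
\end{equation*}
so the correction term $-\tfrac{1}{V}\int_X \rho\,\omega_\vp^n$ is nonnegative. No real obstacle is expected here; the only thing to watch is bookkeeping of signs in the $(I-J)(\vp)$ cancellation and the correct use of the normalization convention on $\rho$.
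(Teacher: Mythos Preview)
Your proof is correct and follows essentially the same route as the paper's: both take logarithms of the identity $e^\rho\omega_\vp^n = e^{-\vp+\rho_0+c(t)}\omega_0^n$, substitute into the entropy term of $\mathcal{M}(\vp)$, and reduce to $c(t)-E(\vp)-\tfrac{1}{V}\int_X\rho\,\omega_\vp^n$, with Jensen's inequality giving the final estimate. The only cosmetic difference is that you route the conversion of $\tfrac{1}{V}\int_X\vp\,\omega_\vp^n$ to $\tfrac{1}{V}\int_X\vp\,\omega_0^n$ explicitly through the definition of $I(\vp)$, whereas the paper does this in one line.
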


\hfill

\begin{proof}
	\bea
	\mathcal{M}(\vp)
	=
	{1 \over V}\int_X (-\rho - \vp + \rho_0 + c(t))\o^n_\vp - {1 \over V} \int_X \rho_0 \o^n_\vp - (I-J)(\vp)
	\\
	=
	J(\vp) - {1 \over V}\int_X \vp \o^n_0 + c(t) - {1 \over V} \int_X \rho \o^n_\vp
	=
	\mathcal{F}(\vp) - {1 \over V} \int_X \rho \o^n_\vp.
	\eea
	
	By the Jensen inequality, we know that 
	\bea
	{1 \over V} \int_X \rho \o^n_\vp
	\leq 
	\log \Bigg({1 \over V} \int_X e^\rho \o^n_\vp\Bigg)
	\leq
	0.
	\eea
\end{proof}

\hfill

The variation of $\mathcal{F}(\vp)$ and $\mathcal{M}(\vp)$ are given by the following expressions:

\begin{equation}\label{VariationOfFunctionals}
\begin{split}
	\d \mathcal{F}(\vp) & = {1 \over V} \int_X \d \vp (e^\rho - 1)\o^n_\vp,
	\\
	\d \mathcal{M}(\vp) &= -{1 \over V} \int_X \d \vp (R - n - \textrm{Tr}_{\o_\vp}\b)\o^n_\vp.
\end{split}
\end{equation}

The following lemma describes the behaviour of these functionals along the flow.

\hfill

\begin{lemma}\label{FAndMabuchi}
	
	Along the inverse Monge-Ampere flow we have the following identities
	\bea
	\dot{\mathcal{F}}(\vp) = -{1 \over V} \int_X (e^\rho - 1)^2 \o^n_\vp,
	\\
	\dot{\mathcal{M}}(\vp) = - {1 \over V}\int_X |\na \rho|^2 e^\rho \o^n_\vp.
	\\
	\dot{\textrm{E}}(\vp) = 0.~~~~~~~~~~~
	\eea
	
	In particular, $\dot{c}(t) \leq 0$ along the flow.
\end{lemma}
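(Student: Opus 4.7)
The plan is to exploit the variation formulas in \ref{VariationOfFunctionals} together with the definition of $\mathrm{E}(\vp)$, substitute the flow equation $\dot{\vp} = 1 - e^\rho$, and then use an integration by parts in the case of the Mabuchi functional.

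First I would handle $\dot{\mathcal{F}}$ and $\dot{\mathrm{E}}$ by direct substitution. For $\mathcal{F}$, plugging $\d \vp = \dot{\vp} = 1 - e^\rho$ into $\d \mathcal{F}(\vp) = \frac{1}{V}\int_X \d \vp (e^\rho - 1) \o^n_\vp$ immediately gives the claimed square. For $\mathrm{E}(\vp)$, the variation is $\d \mathrm{E}(\vp) = \frac{1}{V}\int_X \d \vp \,\o^n_\vp$, so
\begin{equation*}
\dot{\mathrm{E}}(\vp) = \frac{1}{V}\int_X (1 - e^\rho)\,\o^n_\vp = 1 - \frac{1}{V}\int_X e^\rho \o^n_\vp = 0,
\end{equation*}
using the normalization of the Ricci potential.

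The main work is in $\dot{\mathcal{M}}(\vp)$. The key identity is that taking the trace of $\textrm{Ric}(\o_\vp) = \o_\vp + \b + \sqrt{-1}\p\bar{\p}\rho$ with respect to $\o_\vp$ yields $R = n + \mathrm{Tr}_{\o_\vp}\b + \D_{\o_\vp}\rho$, where $\D_{\o_\vp}$ is the complex Laplacian of $\o_\vp$. Substituting into the second line of \ref{VariationOfFunctionals} and inserting $\dot{\vp} = 1 - e^\rho$ gives
\begin{equation*}
\dot{\mathcal{M}}(\vp) = -\frac{1}{V}\int_X (1 - e^\rho)\,\D_{\o_\vp}\rho\,\o^n_\vp.
\end{equation*}
Integrating by parts (the $1$ drops out since $\rho$ is smooth and $X$ is closed) and using $\na e^\rho = e^\rho \na \rho$, this becomes
\begin{equation*}
\dot{\mathcal{M}}(\vp) = -\frac{1}{V}\int_X e^\rho |\na \rho|^2 \o^n_\vp,
\end{equation*}
which is the claimed formula. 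There is no genuine obstacle here; the only subtlety is matching the sign and normalization conventions for $\D_{\o_\vp}$ in the integration by parts, which I would double-check against the convention used when writing $R = n + \mathrm{Tr}_{\o_\vp}\b + \D_{\o_\vp}\rho$.

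Finally, for the monotonicity of $c(t)$, I would combine $\mathcal{F}(\vp) = c(t) - \mathrm{E}(\vp)$ with $\dot{\mathrm{E}} = 0$ to obtain $\dot{c}(t) = \dot{\mathcal{F}}(\vp) = -\frac{1}{V}\int_X (e^\rho - 1)^2 \o^n_\vp \leq 0$, which is the desired conclusion.
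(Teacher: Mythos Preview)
Your proof is correct and follows essentially the same approach as the paper: direct substitution of $\dot\vp = 1 - e^\rho$ into the variation formulas for $\mathcal{F}$ and $\mathrm{E}$, and for $\mathcal{M}$ the trace identity $R - n - \mathrm{Tr}_{\o_\vp}\b = \D_{\o_\vp}\rho$ followed by integration by parts. Your explicit derivation of $\dot c(t) = \dot{\mathcal{F}}(\vp) \leq 0$ from $\mathcal{F}(\vp) = c(t) - \mathrm{E}(\vp)$ and $\dot{\mathrm{E}} = 0$ is a bit more detailed than the paper, which leaves that step implicit.
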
 

\hfill

\begin{proof}
	
	The statement for $\mathcal{F}(\vp)$ functional follows from the variation formula and the flow equation $\dot{\vp} = 1 - e^\rho$. For $\mathcal{M}(\vp)$ we have the following identity along the flow:
	\begin{equation}
		\dot{\mathcal{M}}(\vp) 
		=
		- {1 \over V} \int_X (1 - e^\rho) \D\rho\o^n_\vp
		=
		-{1 \over V} \int_X |\na \rho|^2 e^\rho\o^n_\vp
		\leq 0.
	\end{equation}

	Finally, the statement for $\dot{\textrm{E}}(\vp)$ follows from the normalization condition for $\rho$ along the flow.
\end{proof}

\hfill

\begin{remark}
	
	The content of this lemma was proved in various statements in \cite{CHT17}. However, our proof is simpler for $\mathcal{M}(\vp)$ than \cite[Lemma 4.7]{CHT17}.
\end{remark}

\hfill

In \cite[Proposition 2.4 and Corollary 2.5]{CHT17} authors also compute the second derivative of $\mathcal{F}(\vp)$ along the inverse Monge-Ampere flow:

\bea\label{FSecondDer}
\ddot{\mathcal{F}}(\vp) = {2 \over V} \int_X \Big(|\na \tilde{f}|^2 - |\tilde{f}|^2\Big)d\mu ,
~\\
\tilde{f} := e^\rho -1 - {1 \over V} \int_X ( e^\rho -1) d\mu,
~\\
d\mu = e^\rho \o^n_\vp.~~~~~~~~~~~
\eea

We need the following lemma.

\hfill

\begin{lemma}\label{FConvex}
	
	If $\b \geq 0$, then $\mathcal{F}$ is convex along the inverse Monge-Ampere flow.
\end{lemma}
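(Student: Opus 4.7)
The starting point is the second-order formula $(\ref{FSecondDer})$, which says
\be
\ddot{\mathcal F}(\vp)={2\over V}\int_X\big(|\na\tilde f|^2-|\tilde f|^2\big)\,d\mu,
\ee
with $\tilde f=e^\rho-1-{1\over V}\int_X(e^\rho-1)\,d\mu$ of mean zero with respect to $d\mu=e^\rho\o_\vp^n$, and with $\int_X d\mu=V$ by the normalization of $\rho$. Convexity of $\mathcal F$ along the flow therefore reduces to the Poincar\'e-type inequality
\be
\int_X|\na u|^2\,d\mu\;\geq\;\int_X u^2\,d\mu
\ee
holding for every smooth $u$ with $\int_X u\,d\mu=0$, applied to $u=\tilde f$.

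The plan is to derive this Poincar\'e inequality from the K\"ahler Bakry--\'Emery/Lichnerowicz estimate for the drift Laplacian $Lu=\D_{\o_\vp}u+\langle\na\rho,\na u\rangle_{\o_\vp}$, which is self-adjoint on $L^2(d\mu)$ and satisfies $-\int_X uLu\,d\mu=\int_X|\na u|^2\,d\mu$. Its Bakry--\'Emery Ricci form in the K\"ahler sense is $\textrm{Ric}(\o_\vp)-\sqrt{-1}\p\bar\p\rho$, and the defining relation for $\rho$ recalled at the start of section \ref{SectionPrelim} rewrites this as $\o_\vp+\b$. Since $\b\geq 0$, we get the bound
\be
\textrm{Ric}(\o_\vp)-\sqrt{-1}\p\bar\p\rho\;\geq\;\o_\vp,
\ee
and a weighted Bochner identity on $(1,0)$-forms then produces the spectral gap $\lambda_1(L)\geq 1$, which is exactly the Poincar\'e inequality above. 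Plugging it back into $(\ref{FSecondDer})$ with $u=\tilde f$ gives $\ddot{\mathcal F}(\vp)\geq 0$.

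The only technical step is the weighted Bochner computation for $L$ in the presence of a semipositive twist $\b$. When $\b=0$ this is essentially the ingredient behind the second-derivative computation of \cite{CHT17}; the extra semipositive contribution of $\b$ enters the Bochner identity only through the Bakry--\'Emery Ricci lower bound, where it appears as a nonnegative term and therefore strengthens the inequality rather than obstructing it. This is what I expect to be the main (and only) careful check in the proof.
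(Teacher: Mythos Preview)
Your proposal is correct and is essentially the same argument as the paper's: both reduce convexity via $(\ref{FSecondDer})$ to the spectral gap $\lambda_1\geq 1$ for the drift Laplacian on $L^2(d\mu)$, and both obtain this gap from the weighted (K\"ahler) Bochner/Lichnerowicz identity, where the curvature term $\textrm{Ric}(\o_\vp)-\sqrt{-1}\p\bar\p\rho=\o_\vp+\b\geq\o_\vp$ produces the extra nonnegative $\b$-term. The paper phrases this by differentiating the eigenvalue equation and integrating against $\na^{\bar m}f\,d\mu$ (following \cite{PSSW2007}), arriving at $(\nu-1)\int_X|\na f|^2\,d\mu=\int_X\big(|\bar\na\bar\na f|^2+\b^{j\bar k}\na_j f\na_{\bar k}f\big)\,d\mu\geq 0$, which is exactly the weighted Bochner computation you anticipate.
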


\hfill

\begin{proof}
	
	The statement would easily follow the fact that the operator $-L_\rho =\na^{\bar m}\na_{\bar m} f + \na^{\bar m}\rho \na_{\bar m}$. The proof for $\b = 0$ is given in \cite[Lemma 2]{PSSW2007}, and it applies here as well. For reader's convenience, we briefly recall the main steps here. 
	
	If $\b \geq 0$, then the application of $\na_{\bar m}$ to the equation $\na^{\bar m}\na_{\bar m} f + \na^{\bar m}\rho \na_{\bar m}f= -\nu f$, and integration against $\na^{\bar m}f d\mu$ gives us the following identity
	
	\bea
	(\nu-1)\int_X |\na f|^2 d\mu 
	= 
	\int_X|\bar{\na}\bar{\na} f|^2 + \b^{j \bar k}\na_jf \na_{\bar k}.
	f d\mu\eea
	
	The claim about the spectrum follows from the nonnegativity of $\b$.
\end{proof}

\hfill

Now we state the theorem about the behavior of the twisted inverse MA flow.

\hfill

\begin{theorem}
	Let $(X,\o_0)$ be a compact K\"ahler manifold with no holomorphic vector fields, and $c_1(X) = [\o_0] + [\b]$ for a nonnegative $(1,1)$-form $\b$. Then the inverse Monge-Ampere flow exists for all times, and if there is a twisted K\"ahler-Einstein metric, then the flow converges to it. %modulo the action of $\Aut_0(X,\b) = \{ g \in \Aut_0(X) ~|~ g^*\b = \b \}$.
\end{theorem}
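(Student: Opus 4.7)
The plan is to establish two assertions separately: long-time existence of the flow under no further hypothesis, and convergence to the twisted Kähler-Einstein metric when one exists. Short-time existence is standard from the parabolic character of equation~(2.4), so the real task is to extend the flow to all of $[0,\infty)$.

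For long-time existence, I would proceed via a finite-time-interval $C^0$ bound followed by higher-order estimates. Proposition~2.1 already gives $\vp \leq t + A_1$ and the upper bound $c(t) \leq \sup_X\vp - \inf_X \rho_0$; hence on any interval $[0,T]$ the normalizing constant $c(t)$ is uniformly bounded above. Then the complex Monge-Ampère equation $\o_\vp^n = e^{\rho_0 - \vp + c(t)}\o_0^n$ has right-hand side with $L^p$-bounded density on $[0,T]$, and Kołodziej's $L^\infty$ estimate gives a finite-time lower bound for $\vp$. With $\|\vp\|_{C^0}$ controlled on $[0,T]$, Yau-type parabolic second-order estimates together with Evans-Krylov/Calabi $C^3$ arguments yield uniform equivalence of metrics and higher regularity, preventing any finite-time blow-up. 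The presence of $\beta \geq 0$ does not affect the equation on potentials, so the argument of \cite{CHT17} goes through verbatim.

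For convergence, suppose a twisted Kähler-Einstein metric exists. Since $X$ has no holomorphic vector fields, the Mabuchi functional $\mathcal{M}$ is proper with respect to $J$, i.e.\ there exist $\delta, C > 0$ such that $\mathcal{M}(\vp) \geq \delta J(\vp) - C$ for all $\vp \in \cH$ (the twisted analogue of Tian's properness theorem). By \ref{FAndMabuchi}, $\mathcal{M}$ is monotone non-increasing along the flow, so
\[
\delta J(\vp_t) \leq \mathcal{M}(\vp_t) + C \leq \mathcal{M}(\vp_0) + C
\]
uniformly in $t$. This is condition~(3) of \ref{TheoremOne}, which yields the convergence of the flow. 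Any limit is a smooth critical point of $\mathcal{F}$, hence a twisted Kähler-Einstein potential, and by uniqueness (absence of holomorphic vector fields) it coincides with the one assumed to exist.

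The main obstacle is the long-time existence step, specifically the finite-time $C^0$ bound. Unlike for the Kähler-Ricci flow, the inverse Monge-Ampère flow admits no direct maximum-principle lower bound on $\vp$, and the Perelman-type estimates for the scalar curvature and Ricci potential are unavailable; one has to invoke Kołodziej's pluripotential $L^\infty$ estimate and carefully track the dependence of all constants on the normalizing factor $c(t)$ on finite intervals. Once this is in place, the convergence part is quite soft given \ref{TheoremOne} together with the twisted properness of $\mathcal{M}$.
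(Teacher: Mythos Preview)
Your approach matches the paper's, which defers entirely to \cite[Theorems 4.6 and 4.11]{CHT17} together with \ref{FConvex} and the properness/uniqueness results of \cite{BerBernd,BDL,Brn1}. One small slip: along the flow the Monge--Amp\`ere identity is $e^\rho\o_\vp^n = e^{\rho_0-\vp+c(t)}\o_0^n$ (equation \eqref{RicciPotentialIdentity}), so the density you feed into Ko{\l}odziej carries an extra factor $e^{-\rho}$; this is still bounded on $[0,T]$ by \ref{KnownEstimates}(2), and the $e^{-\vp}$ part is handled via the $\alpha$-invariant once $\sup_X\vp$ is bounded, so the argument goes through. Your appeal to \ref{TheoremOne} for the convergence step is a forward reference in the paper's ordering but not a circularity, since the relevant implication of \ref{TheoremOne} uses only the long-time existence half of the present theorem.
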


\hfill

\begin{proof}
	
	Since the flow looks the same for both twisted and non-twisted cases, $\mathcal{F}(\vp)$ is convex, the arguments from \cite[Theorem 4.6 and Theorem 4.11]{CHT17} together with \cite{BerBernd,BDL,Brn1} carry over verbatim to the twisted case.
\end{proof}

\hfill

\begin{remark}
	
	In fact, one do not really need the presence of the KE metric. As it was pointed out in \cite{CHT17}, the coercivity of $\mathcal{F}(\vp)$ or $\mathcal{M}(\vp)$ is enough for existence of the twisted KE metric. %, and from this one can deduce the convergence of the flow modulo $\Aut_0(X,\b)$. 
\end{remark}

\hfill

\section{The inverse Monge-Ampere flow and the alpha-invariant.}\label{SectionAlphaInvariant}

In this section, we show that the alpha-invariant inequality holds along the inverse MA flow. It is similar to the inequality along the K\"ahler-Ricci flow. Let

\bea
\a(X, \o_0):= \sup \Biggl\{\a >0 ~\vline~ \int_X e^{-\a\vp}\o^n_0 < +\i, ~ \forall\vp \in \cH, ~\sup_X \vp = 0 \Biggr\}.
\eea

This is a well-known $\a$-invariant, which was introduced by Tian \cite{TianAlpha} in the case of Fano manifolds. It was shown in \cite{TianAlpha}, that a Fano manifold $X$ without holomorphic vector fields, which satisfies $\a(X,c_1(X))>{n \over n+1}$, admits a K\"ahler-Einstein metric. Hence, by the results from \cite{CHT17}, one has the convergence of the inverse Monge-Ampere flow. However, this is not a direct way to obtain the convergence of the flow, and one would like to find a more direct approach, similarly to \cite{CW1,Rub2009}, where the relations between the $\a$-invariant and the convergence of the K\"ahler-Ricci flow were established directly. 

Before we proceed, we need a lemma:

\begin{lemma}\label{SupBounds}
	Along the inverse MA flow we have the following inequality:
	\bea\label{SupBoundTwo}
	-{1 \over V} \int_X \vp \o^n_\vp \leq n\sup_X\vp + C.
	\eea
\end{lemma}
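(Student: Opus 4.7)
The plan is to rewrite $-\tfrac{1}{V}\int_X \vp\,\o^n_\vp$ in terms of the Aubin functionals $I$ and $J$ and then exploit the conservation of the Monge--Amp\`ere energy $\textrm{E}$ along the flow. Directly from the definition of $I(\vp)$,
\[ -\tfrac{1}{V}\int_X \vp\,\o^n_\vp \;=\; I(\vp) - \tfrac{1}{V}\int_X \vp\,\o^n_0. \]
I would control the first summand by the classical Aubin inequality $I - J \leq nJ$ recalled in Section \ref{SectionPrelim}. For the second, the key input is the flow invariance $\dot{\textrm{E}}(\vp) = 0$ from \ref{FAndMabuchi}: since $\textrm{E}(\vp) = \tfrac{1}{V}\int_X \vp\,\o^n_0 - J(\vp)$ is constant in $t$, one has $\tfrac{1}{V}\int_X \vp\,\o^n_0 = J(\vp) + \textrm{E}(\vp_0)$ for every $t \geq 0$.

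Substituting this identity back into the first display gives
\[ -\tfrac{1}{V}\int_X \vp\,\o^n_\vp \;=\; (I-J)(\vp) - \textrm{E}(\vp_0) \;\leq\; nJ(\vp) - \textrm{E}(\vp_0). \]
Finally, using the trivial bound $\tfrac{1}{V}\int_X \vp\,\o^n_0 \leq \sup_X \vp$ in the same identity for $J$ yields $J(\vp) \leq \sup_X \vp - \textrm{E}(\vp_0)$, and therefore
\[ -\tfrac{1}{V}\int_X \vp\,\o^n_\vp \;\leq\; n\sup_X \vp - (n+1)\textrm{E}(\vp_0), \]
which is exactly the asserted inequality \eqref{SupBoundTwo} with $C = -(n+1)\textrm{E}(\vp_0)$.

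There is no serious obstacle here; the only point worth noting is that the conservation law $\dot{\textrm{E}} = 0$ converts the $\o^n_0$-average of $\vp$ into $J(\vp)$ plus a time-independent constant, and this precisely matches the $nJ(\vp)$ produced by the Aubin inequality $I-J \leq nJ$, which is what supplies the factor $n$ in front of $\sup_X \vp$. The non-negativity of $J$ needed to apply the Aubin inequality is automatic from its defining formula in Section \ref{SectionPrelim}, so no additional work is required.
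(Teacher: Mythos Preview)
Your proof is correct and is essentially the same as the paper's: the paper normalizes to $\psi=\vp-\sup_X\vp$ and invokes the inequality $-\tfrac{1}{V}\int_X\psi\,\o^n_\psi\leq -(n+1)\textrm{E}(\psi)$ for $\sup_X\psi=0$, which is nothing but $(I-J)(\psi)\leq nJ(\psi)$ together with $J(\psi)\leq -\textrm{E}(\psi)$, and then uses $\textrm{E}(\psi)=\textrm{E}(\vp)-\sup_X\vp$ and the constancy of $\textrm{E}(\vp)$ along the flow. You carry out the identical steps directly on $\vp$ without first subtracting $\sup_X\vp$, arriving at the same constant $C=-(n+1)\textrm{E}(\vp_0)$.
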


\begin{proof}
	Note that for a negative PSH-function $\psi = \vp - \sup_X \vp$ we have the following inequality:
	
	\bea
	&-{1 \over V}\int_X \psi \o^n_\psi \leq -(n+1)\textrm{E}(\psi) = -(n+1)\textrm{E}(\vp) + (n+1)\sup_X\vp,
	\eea
	
	and $\textrm{E}(\vp)$ is constant along the flow. Note also that $\o_\psi = \o_\vp$, and
	
	\bea
	-{1 \over V}\int_X \psi \o^n_\psi = -{1 \over V}\int_X \vp \o^n_\vp + \sup_X \vp
	\eea
\end{proof}

\hfill

Now we state the alpha-invariant inequality along the flow:

\hfill

\begin{proposition}\label{SupBoundsForAlphaInv}
	For any $\a > 0$ the following inequality holds along the inverse Monge-Ampere flow:
	\bea
	((n+1)\a-n)\sup_X \vp 
	\leq
	\log \Bigg({1 \over V}\int_X e^{-\a(\vp - \sup_X \vp)} \o^n_0\Bigg) + C.
	\eea
	If $\a(X,\o_0) > {n \over n+1}$, then there is a uniform constant $C_1>0$, such that $\sup_X \vp \leq C_1$ along the inverse Monge-Ampere flow. 

\end{proposition}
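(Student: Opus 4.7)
The idea is to use the Ricci potential identity \eqref{RicciPotentialIdentity}, rewritten as $\o_0^n = e^{\rho + \vp - \rho_0 - c(t)}\o_\vp^n$, to convert the integral $\frac{1}{V}\int e^{-\alpha\psi}\o_0^n$ into an integral against $\o_\vp^n$, then apply Jensen's inequality, and finally bound the resulting mean-value terms using the monotonicity of the Mabuchi functional together with \ref{SupBounds}. Writing $\psi = \vp - \sup_X\vp \leq 0$ (so that $\o_\psi = \o_\vp$ and $\vp = \psi + \sup_X\vp$), direct substitution yields the key identity
\[
\frac{1}{V}\int_X e^{-\alpha\psi}\o_0^n = e^{\sup_X\vp - c(t)}\cdot\frac{1}{V}\int_X e^{(1-\alpha)\psi + \rho - \rho_0}\o_\vp^n,
\]
reducing the task to a lower bound for the $\o_\vp^n$-integral on the right.

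Applying Jensen's inequality to the convex function $e^x$ with the probability measure $\o_\vp^n/V$ gives
\[
\log\frac{1}{V}\int_X e^{(1-\alpha)\psi + \rho - \rho_0}\o_\vp^n \geq (1-\alpha)\,\frac{1}{V}\int_X \psi\,\o_\vp^n + \frac{1}{V}\int_X \rho\,\o_\vp^n - \frac{1}{V}\int_X \rho_0\,\o_\vp^n.
\]
The $\rho_0$-term is uniformly bounded. For the $\rho$-term I would combine \ref{RelationFMabuchi} with the monotonicity $\mathcal{M}(\vp_t) \leq \mathcal{M}(\vp_0)$ from \ref{FAndMabuchi} and the fact that $\textrm{E}(\vp)$ is constant along the flow, obtaining
\[
\frac{1}{V}\int_X \rho\,\o_\vp^n = \mathcal{F}(\vp) - \mathcal{M}(\vp) \geq c(t) - \textrm{E}(\vp_0) - \mathcal{M}(\vp_0) = c(t) - C,
\]
so the $-c(t)$ factor extracted in the first step is cancelled. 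For the $\psi$-term, \ref{SupBounds} says $-\frac{1}{V}\int\vp\,\o_\vp^n \leq n\sup_X\vp + C$; adding $\sup_X\vp$ upgrades this to $-\frac{1}{V}\int\psi\,\o_\vp^n \leq (n+1)\sup_X\vp + C$, and for $\alpha\in(0,1]$ the factor $(1-\alpha)$ is nonnegative, supplying the lower bound $(1-\alpha)\frac{1}{V}\int\psi\,\o_\vp^n \geq -(1-\alpha)(n+1)\sup_X\vp - C$.

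Assembling the three estimates, the $c(t)$-contributions cancel and the coefficient of $\sup_X\vp$ collapses to $1 - (1-\alpha)(n+1) = (n+1)\alpha - n$, which is the displayed inequality. This argument covers the range $\alpha \in (0,1]$, which is all that is needed for the second claim: if $\a(X,\o_0) > n/(n+1)$, pick any $\alpha'$ with $\tfrac{n}{n+1} < \alpha' < \min\{1,\a(X,\o_0)\}$ and invoke the uniform version of Tian's $\a$-invariant estimate (via $L^1$-compactness of the normalized $\o_0$-psh functions) to get $\frac{1}{V}\int_X e^{-\alpha'\psi}\o_0^n \leq C'$. The inequality just proved then forces $((n+1)\alpha'-n)\sup_X\vp \leq C''$, and since $(n+1)\alpha' - n>0$ this yields $\sup_X\vp \leq C_1$. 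The hard part is the lower bound on $\frac{1}{V}\int\rho\,\o_\vp^n$: in the absence of any Perelman-type $C^0$ control on $\rho$ for the inverse Monge-Amp\`ere flow, Mabuchi monotonicity is the only tool available, and the point is that the $c(t)$ produced by this bound exactly matches the $-c(t)$ produced by the change-of-measure step.
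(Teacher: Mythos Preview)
Your proof is correct and follows essentially the same route as the paper: change of measure via the Ricci potential identity, Jensen's inequality, \ref{SupBounds} for the $\psi$-term, and \ref{RelationFMabuchi} together with the Mabuchi monotonicity from \ref{FAndMabuchi} for the $\rho$-term, with the $c(t)$ contributions cancelling. Your explicit restriction to $\alpha\in(0,1]$ is in fact an observation the paper glosses over: the application of \ref{SupBounds} goes in the needed direction only when $1-\alpha\geq 0$, so the paper's phrase ``for any $\alpha>0$'' is really only established for $\alpha\leq 1$ by its own argument, which as you note is all that is required for the second claim.
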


\hfill

\begin{proof}
  	By the Jensen inequality and \ref{RicciPotentialIdentity}, we have 
	
	\bea
	{1 \over V} \int_X e^{-\a(\vp - \sup_X\vp)}\o^n_0
	=
	{1 \over V} \int_X e^{-\a(\vp - \sup_X\vp)}\cdot e^{\rho + \vp - \rho_0 - c(t)} \o^n_\vp
	\\
	\geq
	\exp\Bigg({1 - \a \over V} \int_X \vp \o^n_\vp + \a \sup_X\vp + {1 \over V} \int_X (\rho - \rho_0 - c(t)) \o^n_\vp\Bigg).
	\eea 
	Taking the logarithm of the both sides and using \ref{SupBounds}, we obtain the following inequality:
	
	\begin{align}\label{PreAlphaEstimate0}
		((n+1)\a-n)\sup_X\vp   \leq  {1 \over V}\int_X \Big(-\rho + \rho_0  + c(t)\Big) \o^n_\vp + 
		\log \Bigg({1 \over V}\int_X e^{-\a(\vp - \sup_X \vp)} \o^n_0\Bigg).
	\end{align}

	By \ref{RelationFMabuchi} and \ref{FAndMabuchi}, we have the following
	\bea
	\mathcal{M}(\vp(0)) \geq \mathcal{M}(\vp) 
	= \mathcal{F}(\vp) - {1 \over V}\int_X \rho \o^n_\vp =
	c(t)-{1 \over V}\int_X \rho \o^n_\vp + C.
	\eea
	
	Hence, the first term on the right hand side is bounded above, and the first statement follows. 
	Now, if $\a > {n \over n+1}$, then $((n+1)\a-n) > 0$, and the right hand side in \ref{PreAlphaEstimate0} is bounded. Hence, the second statement follows.
\end{proof}

\hfill

\section{The convergence of the inverse Monge-Ampere flow.}\label{SectionConvergence}
In this section we prove \ref{TheoremOne}. First, we show that the convergence of the flow is guaranteed if $\sup_X \vp$ is bounded. 

\hfill

\begin{proposition}\label{UniformSupremumBounds}
	Let $(X, \o_0)$ be a compact K\"ahler manifold with $H^0(X,T^{1,0} X) = 0$, and $[\o_0] +{\b} = c_1(X)$ for a closed nonnegative form $\b$. Suppose $\vp = \vp(t)$ is a solution to the inverse Monge-Ampere flow. Then following statements are equivalent:
	\begin{itemize}
		\item[1] The inverse Monge-Ampere flow converges and $X$ has a twisted K\"ahler-Einstein metric;
		
		\item[2] There is a uniform constant $C>0$, such that $\sup_X \vp \leq C$;
		
		\item[3] There is a uniform constant $C>0$, such that the average ${1 \over V}\int_X \vp \o^n_0$ is bounded from above.
	\end{itemize}
\end{proposition}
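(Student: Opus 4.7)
The implications $(1) \Rightarrow (2) \Rightarrow (3)$ are immediate: convergence of the flow gives a uniform $C^0$-bound on $\vp_t$, which dominates both $\sup_X \vp_t$ and $\frac{1}{V}\int_X \vp_t\,\o_0^n$. For $(3) \Rightarrow (2)$ I would use the universal estimate
\bea
\sup_X \vp - \frac{1}{V}\int_X \vp\, \o_0^n
\;=\;
\frac{1}{V}\int_X (\sup_X \vp - \vp)\,\o_0^n
\;\leq\; C_0,
\eea
valid for every $\o_0$-plurisubharmonic $\vp$ with a constant $C_0 = C_0(X,\o_0)$, coming from the $L^1$-compactness of the class of $\o_0$-psh functions normalized by $\sup = 0$. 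A uniform upper bound on the average then forces a uniform upper bound on $\sup_X \vp$.

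The main content is $(2) \Rightarrow (1)$. Assume $\sup_X \vp_t \leq C$. Since $\textrm{E}(\vp_t) = \textrm{E}(\vp_0)$ is conserved (\ref{FAndMabuchi}) and $J(\vp_t) = \frac{1}{V}\int \vp_t\,\o_0^n - \textrm{E}(\vp_0) \geq 0$, we also have $\sup_X \vp_t \geq \textrm{E}(\vp_0)$; so $\sup_X \vp_t$ is two-sided bounded and $J(\vp_t)$ is uniformly bounded. Standard comparisons then give a uniform bound $d_1(0,\vp_t) \leq C'$, placing the trajectory in a $d_1$-precompact subset. Combining with $d_1$-continuity of $\mathcal{F}$ on such subsets (cf.\ \cite{BerBernd,BDL,Brn1} invoked at the end of Section 2) and the monotonicity of $\mathcal{F}$ from \ref{FAndMabuchi}, I would deduce a uniform lower bound $\mathcal{F}(\vp_t) \geq -C$.

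The identity $\dot{\mathcal{F}}(\vp_t) = -\frac{1}{V}\int_X (e^\rho - 1)^2 \o_{\vp_t}^n$ from \ref{FAndMabuchi}, together with $\mathcal{F}$ decreasing and bounded below, then gives $\int_0^\infty \frac{1}{V}\int_X (e^\rho - 1)^2 \o_{\vp_t}^n\, dt < \infty$, so a sequence $t_j \to \infty$ exists along which $\|e^{\rho_{t_j}} - 1\|_{L^2(\o_{\vp_{t_j}}^n)} \to 0$. Passing to a $d_1$-convergent subsequence $\vp_{t_j}\to \vp_\infty$ and taking the limit in \ref{RicciPotentialIdentity}, $e^\rho \o_\vp^n = e^{-\vp + \rho_0 + c(t)}\o_0^n$, identifies $\vp_\infty$ as a weak solution of the twisted Monge-Amp\`ere equation for a twisted K\"ahler-Einstein metric. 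Regularity for such equations yields a smooth twisted KE potential, and the theorem at the end of Section 2 then upgrades this to full convergence of the flow.

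The principal difficulty is the lower bound for $\mathcal{F}$. Jensen's inequality on $c(t)$ produces only upper bounds, and bounding $\frac{1}{V}\int e^{-\tilde\vp}\o_0^n$ uniformly in the normalized potential $\tilde\vp = \vp - \sup_X\vp$ directly would require $\a(X,\o_0) \geq 1$, which is not assumed here. The indirect route via $d_1$-boundedness of the trajectory and $d_1$-continuity of $\mathcal{F}$ on the finite-energy completion of $\cH$ is what turns the sup bound into the effective lower bound needed for the gradient flow analysis.
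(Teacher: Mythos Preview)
Your outline for the easy directions and for the endgame of $(2)\Rightarrow(1)$ is fine, but the heart of the argument --- the lower bound for $\mathcal{F}(\vp_t)$ --- has a genuine gap. Two claims fail as written. First, a uniform bound $d_1(0,\vp_t)\leq C'$ does \emph{not} by itself put the trajectory in a $d_1$-precompact set: $(\mathcal{E}^1,d_1)$ is complete but not locally compact, and bounded sets are not precompact without an additional entropy bound (this is exactly the content of the strong compactness theorem \cite[Theorem 4.14]{BBEGZ11} that the paper invokes later). Second, $\mathcal{F}$ is not $d_1$-continuous in the direction you need: by Fatou, $\vp\mapsto\int_X e^{-\vp+\rho_0}\o_0^n$ is $d_1$-lower semicontinuous, so $c(t)=-\log(\cdot)$ and hence $\mathcal{F}=c(t)-\textrm{E}$ are only $d_1$-\emph{upper} semicontinuous. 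Upper semicontinuity on a precompact set yields upper bounds, not the lower bound you want. The references \cite{BerBernd,BDL,Brn1} you cite from the end of Section~2 concern convexity of the K-energy and the Bando--Mabuchi uniqueness theorem; they do not give $d_1$-continuity of the Ding functional.

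The paper closes this gap by a different mechanism. From $\sup_X\vp_t\leq C$ and the conservation of $\textrm{E}$ one gets, via \ref{SupBounds}, a uniform Monge--Amp\`ere energy bound $-\tfrac{1}{V}\int_X(\vp_j-\sup_X\vp_j)\,\o_{\vp_j}^n\leq C$. By \cite[Corollary~1.8, Corollary~2.7]{GZ07} this forces any $L^1$-limit $\vp_\infty$ to have zero Lelong numbers; Skoda's theorem then gives $\int_X e^{-p\vp_\infty}\o_0^n<\infty$ for \emph{every} $p>0$, and the effective semicontinuity theorem of Demailly--Koll\'ar upgrades this to a uniform bound on $\int_X e^{-\vp_j}\o_0^n$ along the sequence. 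Only now is $c(t_j)$, and hence $\mathcal{F}(\vp_{t_j})$, bounded below; monotonicity then extends this to all $t$. Your last paragraph correctly identifies that controlling $\int_X e^{-\tilde\vp}\o_0^n$ is the crux, but the Lelong-number/Skoda/semicontinuity chain is the missing idea that makes it work without assuming $\a(X,\o_0)\geq 1$.
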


\hfill

\begin{proof}
	The implication $(1) \implies (3)$ follows from the argument \cite{CHT17}. Indeed, if a twisted KE metric exists, then by \cite{PSSW2007}, the coercivity of the Mabuchi functional implies the upper bound on ${1 \over V}\int_X \vp \o^n_0$. The equivalence $(2) \iff (3)$ follows from the standard inequality 
	
	\bea
	{1 \over V}\int_X \vp \o^n_0 
	\leq
	\sup_X \vp
	\leq
	{1 \over V}\int_X \vp \o^n_0 + C(X, \o_0).
	\eea
	
	Now we show the implication $(2) \implies (1)$. The proof is a modification of the arguments contained in \cite{CHT17}.
	
	Recall that $\textrm{E}(\vp)$ is constant along the flow. From this fact and the upper bound on $\sup_X \vp$ we infer that there is sequence of times $\{t_j\}$, such that $\vp_j = \vp(t_j)$ converges to $\vp_\i$ in $L^1(X, \o^n_0)$. For $\psi_j:= -\vp_j + \sup_X \vp_j$ we have the following inequality
	
	\bea
	-{1 \over V}\int_X \psi_j \o^n_j \leq C, ~~ \o_j = \o_{\vp_j} = \o_{\psi_j}
	\eea 
	
	for a uniform constant $C$. This implies that $\psi_\i$ has zero Lelong numbers (see \cite[Corollary 1.8 and Corollary 2.7]{GZ07}). Hence $\vp_\i$ also has zero Lelong numbers.  Then Skoda's Theorem \cite[Theorem 8.11]{GZBook} says that
	
	\bea
	{1 \over V}\int_X e^{-p\vp_\i}\o^n_0 < +\i
	\eea
	
	for all $p>0$. Moreover, the effective version of semicontinuity theorem also guarantee that $e^{-p\vp_j}$ converge to $e^{-p\vp_j}$ in $L^1(X, \o^n_0)$. We can conclude that there is a uniform bound for $||e^{-\vp_j}||_{L^1(X, \o^n_0)}$. This, in turns, implies that 
	
	\bea
	c(t) = - \log \Big({1 \over V}\int_X e^{\rho_0-\vp}\o^n_0\Big)
	\eea
	
	is uniformly bounded. Hence there is a constant $B>0$, such that $\mathcal{F}(\vp_j) \geq -B$.
	Now, we have ${1 \over V}\int_X \vp_j \o^n_j \leq \sup_X \vp_j \leq C$, so $J(\vp_j)$ is bounded by a uniform constant, because
	\bea
	J(\vp_j) = {1 \over V}\int_X \vp_j \o^n_j + \textrm{E}(\vp) = {1 \over V}\int_X \vp_j \o^n_j + \textrm{const}.
	\eea
	along the inverse Monge Ampere flow. Hence, $\vp_j \rightarrow \vp_\i$ in the strong topology. Indeed, one can find constants $\ve, C > 0$, independent on $j$, such that the inequality 
	\bea
	\mathcal{M}(\vp_j) \geq \ve J(\vp_j) -C
	\eea
	holds along the sequence $\vp_j$. By \cite[Theorem 4.14]{BBEGZ11}, our sequence is strongly compact. The arguments from \cite[pages 72-73]{CHT17} allow us to show that $\vp_\i$ is smooth and satisfies the following Monge-Ampere equation;
	
	\bea
	{1 \over V} \o^n_{\vp_\i} 
	= 
	{e^{\rho_0 - \vp_\i}\o^n_0 \over \int_X e^{\rho_0 - \vp_\i}\o^n_0},
	\eea
	
	hence, defines a unique twisted K\"ahler-Einstein metric. The convergence of the flow now follows from the uniqueness result due to the generalization of Bando-Mabuchi result to the twisted case due to Berndtsson \cite[Section 6]{Brn1}.	
\end{proof}

%\hfill

%\begin{remark}
	%If $H^0(X, T^{1,0}X) \neq 0$, then the bound on $\sup_X \vp$ still implies the existence of a twisted KE metric. The convergence of the flow then follows from the same argument as in \cite[Theorem 4.11]{CHT17}.
%\end{remark}

\hfill

Now we show that the $L^p$-integrability of $e^{-\vp}$ implies the coercivity of the Mabuchi functional. This result is similar to the result from \cite{PSS2006} for the K\"ahler-Ricci flow.

\hfill

\begin{proposition}\label{lpestimate}
	If for some $p>1$ the integral ${1 \over V} \int_X e^{-p\vp} \o^n_0$ is uniformly bounded along the flow, then the Mabuchi functional is coercive, and the flow converges to the twisted K\"ahler-Einstein metric.
\end{proposition}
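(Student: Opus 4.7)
The plan is to combine the $\a$-invariant inequality from Proposition \ref{SupBoundsForAlphaInv} with the $L^p$ hypothesis to produce a uniform upper bound on $\sup_X \vp$ along the flow, after which Proposition \ref{UniformSupremumBounds} delivers convergence to a twisted K\"ahler-Einstein metric, and the standard existence-implies-coercivity machinery handles the Mabuchi statement.

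Concretely, I would apply the inequality from Proposition \ref{SupBoundsForAlphaInv} with $\a = p$. This is permissible because $p > 1 > n/(n+1)$, so the coefficient $(n+1)p - n$ on the left hand side is strictly positive. Using the identity
\bea
\int_X e^{-p(\vp - \sup_X \vp)}\o^n_0 = e^{p\sup_X \vp}\int_X e^{-p\vp}\o^n_0
\eea
together with the hypothesis that the rightmost integral is uniformly bounded, the logarithmic term in Proposition \ref{SupBoundsForAlphaInv} is at most $p\sup_X \vp + C$. Substituting back into that proposition and cancelling, the $\sup_X \vp$ terms collapse to
\bea
n(p-1)\sup_X \vp \leq C',
\eea
and since $p > 1$, this is a uniform upper bound on $\sup_X \vp$ along the flow.

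With that bound in hand, condition (2) of Proposition \ref{UniformSupremumBounds} is satisfied, so the flow converges in $L^1$ to a twisted K\"ahler-Einstein metric; in particular such a metric exists on $X$. Under the standing assumption $H^0(X, T^{1,0}X) = 0$, the existence of this twisted K\"ahler-Einstein metric implies coercivity of the Mabuchi functional $\mathcal{M}$: in the untwisted case this is proved in \cite{PSSW2007}, and the twisted extension is by now standard and is used in \cite{BBEGZ11}. The convergence of the flow itself has already been obtained in the previous step, so no further work is needed for the second conclusion.

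The main obstacle is essentially cosmetic, since all the heavy analytic input has already been done in Sections \ref{SectionAlphaInvariant} and \ref{SectionConvergence}. The only genuinely new ingredient is the algebraic trick of plugging the $L^p$ hypothesis directly into the $\a$-invariant inequality at the value $\a = p$, which is what unlocks the uniform bound on $\sup_X \vp$ and triggers the rest of the argument via Proposition \ref{UniformSupremumBounds}.
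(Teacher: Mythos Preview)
Your argument is correct but takes a different route from the paper. The paper applies Jensen's inequality directly to $\frac{1}{V}\int_X e^{-p\vp}\o_0^n$ (writing $\o_0^n = (\o_0^n/\o_\vp^n)\,\o_\vp^n$) to obtain a lower bound on the entropy term $\frac{1}{V}\int_X \log(\o_\vp^n/\o_0^n)\,\o_\vp^n$, and then reads off from the definition of $\mathcal{M}$ the explicit coercivity inequality $\mathcal{M}(\vp) \geq (p-1)(I-J)(\vp) - A$ along the flow. Monotonicity of $\mathcal{M}$ then bounds $J(\vp)$, hence $\sup_X\vp$, and \ref{UniformSupremumBounds} finishes. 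You instead feed $\a = p$ into \ref{SupBoundsForAlphaInv} and cancel to reach $n(p-1)\sup_X\vp \leq C'$ in one line, then invoke \ref{UniformSupremumBounds}, and only afterwards recover coercivity from the existence of the twisted K\"ahler--Einstein metric. Your path to the $\sup$-bound is slicker (it reuses a proposition already in the paper), but it treats coercivity as an a~posteriori consequence rather than exhibiting it directly; the paper's approach makes the coercivity mechanism --- and the sharp constant $\varepsilon = p-1$ --- transparent, which is closer in spirit to the analogous argument for the K\"ahler--Ricci flow in \cite{PSS2006}.
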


\hfill

\begin{remark}
	If the flow converges, then $\sup_X \vp$ is uniformly bounded, and the integral ${1 \over V} \int_X e^{-p\vp} \o^n_0$ is bounded by the same arguments, as in the proof of \ref{UniformSupremumBounds}.
\end{remark}

\hfill

\begin{proof}
	By the standard argument with the Jensen inequality (see for example \cite[Proposition 6.2]{Z2021}), we have
	
	\begin{multline}
		\log \Bigg({1 \over V} \int_X e^{-p\vp} \o^n_0\Bigg) 
		= 
		\log \Bigg({1 \over V} \int_X e^{-p\vp} { \o^n_0 \over \o^n_\vp} \o^n_\vp\Bigg) 
		\geq 
		{-p \over V} \int_X \vp \o^n_\vp - {1 \over V} \int_X \log \Big({\o^n_\vp 	\over \o^n_0}\Big)\o^n_\vp 
		\\
		= 
		pI(\vp) - {p \over V}\int_X \vp \o^n_0 - {1 \over V} \int_X \log \Big({\o^n_\vp \over \o^n_0}\Big)\o^n_\vp 
		= 
		p(I - J)(\vp) - {1 \over V} \int_X \log \Big({\o^n_\vp \over \o^n_0}\Big)\o^n_\vp - C_0,
	\end{multline}
	
	where $C_0 = \textrm{E}(\vp)$.
	
	If  ${1 \over V} \int_X e^{-p\vp} \o^n_0 \leq C$ for $p>1$, then 
	
	\bea
	{1 \over V} \int_X \log \Big({\o^n_\vp \over \o^n_0}\Big)\o^n_\vp
	\geq
	p(I - J)(\vp) - B
	\eea
	for some constant $B$. From the definition of the Mabuchi functional, we see that there are constants $\ve = p-1>0$, and $A$, such that
	
	\bea
	\mathcal{M}(\vp) \geq \ve(I-J)(\vp) - A.
	\eea
	
	The coercivity of $\mathcal{M}(\vp)$ guarantees that $J(\vp)$ is bounded above along the flow, hence $\int_X \vp \o^n_0$ is also bounded. The application of \ref{UniformSupremumBounds} finishes the rest of the proof. 
\end{proof}

\hfill

Using the previous proposition, we can establish $(8)$ and $(9)$ from \ref{TheoremOne}.

\hfill

\begin{proposition}\label{BoundOnInfAndOsc}
	If there is a uniform constant $C>0$, such that $\textrm{osc}(\vp) \leq C$, or $\inf_X \vp \geq C$, then the inverse Monge-Ampere flow converges to a twisted K\"ahler-Einstein metric, and $\textrm{osc}(\vp)$ is bounded. 
\end{proposition}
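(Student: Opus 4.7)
The plan is to reduce both hypotheses to condition (5) of \ref{TheoremOne}, namely a uniform $L^p$ bound on $e^{-p\vp}$ for some $p>1$, and then invoke \ref{lpestimate} to conclude convergence to a twisted K\"ahler-Einstein metric.

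The $\inf$-bound hypothesis is immediate. If $\inf_X \vp \geq C$ uniformly along the flow, then for any $p>1$ the pointwise estimate $e^{-p\vp} \leq e^{-pC}$ gives
$$\frac{1}{V}\int_X e^{-p\vp}\,\o^n_0 \leq e^{-pC}$$
uniformly in $t$. Thus \ref{lpestimate} applies and the flow converges.

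For the oscillation hypothesis, the strategy is to first produce a uniform lower bound on $\sup_X \vp$ and then use bounded oscillation to deduce a uniform lower bound on $\inf_X \vp$, reducing to the previous case. The required lower bound on $\sup_X \vp$ comes from the constancy of $\textrm{E}(\vp)$ along the flow, established in \ref{FAndMabuchi}: from the identity $\textrm{E}(\vp) = \frac{1}{V}\int_X \vp\,\o^n_0 - J(\vp)$ and the observation that $J(\vp) \geq 0$ (each term in the defining sum is the integral of a semipositive top form, obtained by wedging the semipositive $(1,1)$-form $\sqrt{-1}\p\vp\wedge\bar{\p}\vp$ with positive powers of $\o_\vp$ and $\o_0$), one has
$$\sup_X \vp \;\geq\; \frac{1}{V}\int_X \vp\,\o^n_0 \;=\; \textrm{E}(\vp_0) + J(\vp) \;\geq\; \textrm{E}(\vp_0).$$
Combining this with $\textrm{osc}(\vp)\leq C$ gives $\inf_X \vp \geq \textrm{E}(\vp_0) - C$, which is precisely the hypothesis of the previous case.

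Once convergence is established, \ref{UniformSupremumBounds} provides a uniform upper bound on $\sup_X\vp$, and combined with the lower bound on $\inf_X \vp$ obtained above this yields a uniform bound on $\textrm{osc}(\vp)$. There is no substantive obstacle here: the argument is bookkeeping built around the observation that $\inf_X \vp \geq C$ is the weaker of the two hypotheses, and that the constancy of $\textrm{E}$ together with the positivity of $J$ automatically forces the spatial average of $\vp$, hence $\sup_X \vp$, to stay bounded below along the flow.
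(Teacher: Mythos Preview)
Your proof is correct and follows the same route as the paper: reduce to the $L^p$ bound on $e^{-p\vp}$ and invoke \ref{lpestimate}. You are in fact more explicit than the paper on one point. The paper's proof only spells out the case $\inf_X\vp$ bounded below, obtaining $\frac{1}{V}\int_X e^{-p\vp}\o^n_0 \leq e^{-p\inf_X\vp}$ and citing \ref{lpestimate}, and then remarks that the $\textrm{osc}(\vp)$ claim follows from \ref{UniformSupremumBounds}; the passage from the hypothesis ``$\textrm{osc}(\vp)$ bounded'' to ``$\inf_X\vp$ bounded below'' is left implicit. Your argument supplies exactly this missing bridge via $\sup_X\vp \geq \textrm{E}(\vp_0)$, using the constancy of $\textrm{E}$ along the flow (\ref{FAndMabuchi}) and $J\geq 0$.
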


\hfill

\begin{proof}
	If $-\inf_X \vp \leq C$, then for any $p > 0$
	\bea
	{1 \over V} \int_X e^{-p\vp} \o^n_0
	\leq
	e^{-p\inf_X \vp},
	\eea
	and the statement follows from \ref{lpestimate}. The bound for $\textrm{osc}(\vp)$ follows from \ref{UniformSupremumBounds}.
\end{proof}

\hfill

Using the previous proposition, we can show that uniform bounds on $d_p(0,\vp)$ are equivalent to convergence of the flow.

\hfill

\begin{proposition}\label{MetricsBound}
	If for any $p \geq 1$ there is a uniform constant $C_p > 0$, such that $d_p(0,\vp) \leq C_p$ along the flow, then the inverse Monge-Ampere flow converges.
\end{proposition}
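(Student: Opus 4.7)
The plan is to reduce to Proposition~\ref{UniformSupremumBounds} by showing that $d_p$-boundedness forces either the average $\frac{1}{V}\int_X \vp_t\,\o^n_0$ or $\sup_X \vp_t$ to remain uniformly bounded. The first step is the standard monotonicity of the Darvas metrics in $p$, namely $d_1(0, \vp_t) \leq d_p(0, \vp_t) \leq C_p$, which reduces the argument to the case $p=1$.

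The key technical input I would use is Darvas's identity
$$d_1(0, \vp) = \textrm{E}(\vp) - 2\textrm{E}(P(0, \vp)),$$
where $P(0, \vp)$ denotes the pluripotential rooftop envelope of the pair $(0, \vp)$. Combined with the constancy of $\textrm{E}$ along the flow (Lemma~\ref{FAndMabuchi}), the bound $d_1(0, \vp_t) \leq C$ translates into a uniform two-sided control of $\textrm{E}(P(0, \vp_t))$. In particular, $\{P(0, \vp_t)\}$ lies in a set of uniformly bounded Monge-Amp\`ere energy, hence is relatively compact in the finite-energy class $\mathcal{E}^1(X, \o_0)$ with respect to $d_1$ by the Berman-Boucksom-Eyssidieux-Guedj-Zeriahi compactness result~\cite{BBEGZ11}.

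To upgrade this to strong compactness of $\{\vp_t\}$ itself, I would exploit the gradient-flow structure: $\mathcal{F}(\vp_t)$ is monotonically decreasing along the flow by Lemma~\ref{FAndMabuchi} and is $d_1$-lower semicontinuous by Berman-Berndtsson (see \cite{BerBernd, BDL}). Along a suitable sequence $t_j \to \i$, extract a $d_1$-limit $\vp_\i \in \mathcal{E}^1$ which is a minimizer of $\mathcal{F}$ subject to $\textrm{E}(\cdot) = E_0$. Regularity of minimizers for Ding-type functionals then guarantees that $\vp_\i$ is smooth and satisfies the twisted K\"ahler-Einstein equation. Uniqueness of such metrics in the absence of holomorphic vector fields, due to Berndtsson~\cite[Section 6]{Brn1}, combined with the parabolic regularity arguments that were used in the proof of Proposition~\ref{UniformSupremumBounds}, then promotes subsequential $d_1$-convergence to full smooth convergence of the flow.

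The main obstacle is the compactness step: a $d_1$-bound alone typically yields only weak $L^1$-compactness, not strong $d_1$-compactness, so the monotonicity of $\mathcal{F}$ along the flow is essential to rule out oscillatory cluster behavior. A cleaner alternative worth attempting, which avoids the Darvas envelope machinery, would be to sharpen the bound on $\textrm{E}(P(0, \vp_t))$ into a direct upper bound on $\sup_X \vp_t$ by combining the standard inequality $\sup_X \vp \leq \frac{1}{V}\int_X \vp\,\o^n_0 + C$ with the relation $\frac{1}{V}\int_X \vp\,\o^n_0 = J(\vp) + E_0$ valid along the flow; at that point Proposition~\ref{UniformSupremumBounds} would conclude the proof without further work.
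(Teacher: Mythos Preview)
Your argument is far more elaborate than necessary, and the central portion does not close. The paper's proof is a single line: Darvas shows in \cite[Corollary~4]{DarvasGeometry} that for every $p\geq 1$ there is a constant $C(p)$ with
\[
\sup_X \vp \leq C(p)\bigl(d_p(0,\vp)+1\bigr),
\]
so a uniform bound on $d_p(0,\vp_t)$ immediately gives a uniform upper bound on $\sup_X\vp_t$, and \ref{UniformSupremumBounds} finishes the job. No envelopes, no compactness in $\mathcal{E}^1$, no variational characterization of minimizers are needed.

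Your main route has a genuine gap at the step where you ``extract a $d_1$-limit $\vp_\i$ which is a minimizer of $\mathcal{F}$ subject to $\textrm{E}(\cdot)=E_0$.'' Monotonicity of $\mathcal{F}$ along the flow only tells you that a subsequential limit has $\mathcal{F}$-value at most $\mathcal{F}(\vp_0)$; it does \emph{not} make the limit a minimizer of $\mathcal{F}$ over the whole constrained set, so you cannot invoke regularity of Ding minimizers. The detour through $P(0,\vp_t)$ is also beside the point: compactness of the envelopes does not transfer back to the $\vp_t$ themselves without further work.

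Your ``cleaner alternative'' in the last paragraph is essentially correct and is exactly what the paper records in \ref{RemarkAboutD1}: from $d_1(0,\vp)\leq C$ and Darvas's equivalence $B^{-1}J(\vp)\leq d_1(0,\vp)\leq BJ(\vp)$ \cite[Theorem~6.2]{DarvasGeometry} one gets a uniform bound on $J(\vp_t)$, hence on $\frac{1}{V}\int_X\vp_t\,\o^n_0 = J(\vp_t)+\textrm{E}(\vp_t)$, and \ref{UniformSupremumBounds} applies. You should lead with that, or better yet with the direct $\sup$-estimate above, and drop the envelope/minimizer machinery entirely.
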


\hfill

\begin{proof}
	By \cite[Corollary 4]{DarvasGeometry}, there is a uniform constant $C(p)>0$, such that
	\bea
	\sup_X \vp \leq C(p)(d_p(0,\vp) + 1).
	\eea
	Hence, the bound on $d_p(0,\vp)$ implies the bound on $\sup_X \vp$. 
\end{proof}

\hfill

\begin{remark}\label{RemarkAboutD1}
	We know that along the flow 
	\bea
	J(\vp) -A \leq \sup_X \vp \leq J(\vp) + A.
	\eea
	
	By \cite[Theorem 6.2]{DarvasGeometry}, there is $B>1$, such that 
	\bea
	B^{-1}J(\vp) \leq d_1(0,\vp) \leq BJ(\vp).
	\eea
	Hence, for $p=1$ the statement of the proposition above follows directly from the properties of the flow. In fact, it shows that the flow converges if and only if $d_1(0,\vp)$ is bounded.
\end{remark}

\hfill

We finish this section by collecting all the estimates in one theorem, and prove \ref{TheoremOne}.

\hfill

\begin{theorem}(=\ref{TheoremOne})
	Assume that $(X,\o_0)$ is a compact K\"ahler manifold. Let $\b \geq 0$ be a semipositive $(1,1)$-form, such that $c_1(X) = [\o_0] + [\b]$. Assume further, that $X$ does not have holomorphic vector fields. 
	Then the following estimates along the inverse Monge-Ampere flow are equivalent and imply the convergence of the flow:
	\begin{itemize}
		\item[(1)] $\sup_X \vp$ is uniformly bounded from above;
		
		\item[(2)] The average ${1 \over V}\int_X \vp \o^n_0$ is uniformly bounded from above;
		
		\item[(3)] $J(\vp)$ and $d_1(0,\vp)$ are uniformly bounded from above;
		
		\item[(4)] $I(\vp)$ is uniformly bounded;
		
		\item[(5)] For some $p>1$ the integral ${1\over V}\int_X e^{-p\vp}\o^n_0$ is uniformly bounded.
	\end{itemize}
	
	The following bounds imply the convergence of the inverse Monge-Ampere flow.
	\begin{itemize}
		\item[(6)] The alpha-invariant $\a(X, \o_0)$ is bigger than ${n \over n+1}$;
		
		\item[(7)] $\textrm{osc}(\vp)$ is uniformly bounded along the flow;
		
		\item[(8)] $\inf_X\vp$ is uniformly bounded below along the flow;
		
		\item[(9)] For some $p > 1$ the distance $d_p(0, \vp)$ is uniformly bounded from above along the flow.
	\end{itemize}
	In particular, if $(X,\o_0)$ does not admit a twisted K\"ahler-Einstein metric, then both $||\vp||_{C^0}$ and ${1 \over V}\int_X \vp \o^n_0$ are unbounded.
\end{theorem}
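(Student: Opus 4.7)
The plan is to assemble the equivalences in (1)--(5) and the implications (6)--(9) from the propositions established earlier in the section, together with standard inequalities between $I$, $J$, $\sup_X\vp$, and the $\o_0$-average of $\vp$. Once $\sup_X\vp$ is shown bounded along the flow, convergence follows directly from Proposition \ref{UniformSupremumBounds}, so each of the nine conditions will be reduced to a bound on $\sup_X\vp$ (or on one of the other conditions in (1)--(5)).

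First I handle the chain (1)--(5). The equivalence (1)$\iff$(2) is immediate from the standard two-sided bound ${1\over V}\int_X\vp\,\o_0^n\leq \sup_X\vp\leq {1\over V}\int_X\vp\,\o_0^n+C(X,\o_0)$ for $\o_0$-plurisubharmonic $\vp$, already invoked in Proposition \ref{UniformSupremumBounds}. For (1)$\iff$(3), I use the fact that $\textrm{E}(\vp)={1\over V}\int_X\vp\,\o_0^n-J(\vp)$ is conserved along the flow by Lemma \ref{FAndMabuchi}, so bounding $J$ is equivalent to bounding the average; the equivalence with a uniform $d_1(0,\vp)$ bound is exactly Remark \ref{RemarkAboutD1}. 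For (3)$\iff$(4), I combine $I\geq 0$ with the standard inequalities ${1\over n}J\leq I-J\leq nJ$. For (5)$\Longrightarrow$(1), Proposition \ref{lpestimate} yields Mabuchi coercivity and hence convergence, which forces $\sup_X\vp$ to be bounded; conversely, (1) yields convergence by Proposition \ref{UniformSupremumBounds}, so $\vp_t$ stays in a compact set of smooth potentials and (5) holds trivially for every $p>1$.

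For (6)--(9), the alpha-invariant bound (6) gives $\sup_X\vp\leq C$ directly from Proposition \ref{SupBoundsForAlphaInv}, reducing to (1); condition (7) is the oscillation case of Proposition \ref{BoundOnInfAndOsc}; condition (8) follows either from the inf case of that proposition or from the trivial estimate ${1\over V}\int_X e^{-p\vp}\,\o_0^n\leq e^{-p\inf_X\vp}$ combined with Proposition \ref{lpestimate}; condition (9) is Proposition \ref{MetricsBound}. The final sentence of the theorem is then the contrapositive of (1) and (2): if either $\|\vp\|_{C^0}$ or ${1\over V}\int_X\vp\,\o_0^n$ were uniformly bounded along the flow, Proposition \ref{UniformSupremumBounds} would produce a smooth twisted K\"ahler--Einstein metric on $X$, contradicting the assumption.

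The only subtle step is the implication (1)$\Longrightarrow$(5), where one must upgrade a one-sided $\sup$ bound into a uniform $L^p$ integrability of $e^{-\vp}$. My cleanest route is to invoke Proposition \ref{UniformSupremumBounds} first, so that smooth convergence of the flow automatically produces uniform bounds on ${1\over V}\int_X e^{-p\vp}\,\o_0^n$. An alternative self-contained path, avoiding the convergence detour, would be to extract an $L^1$-convergent subsequence $\vp_j\to\vp_\i$, verify that $\vp_\i$ has zero Lelong numbers via the bounded-energy criterion of \cite{GZ07}, apply Skoda's theorem together with the effective semicontinuity theorem to obtain a uniform subsequential $L^p$ bound, and propagate it to all times using the monotonicity of $c(t)$ that follows from Lemma \ref{FAndMabuchi}; but this is essentially the pluripotential content of Proposition \ref{UniformSupremumBounds}, so invoking that proposition is the most economical choice.
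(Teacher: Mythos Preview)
Your proposal is correct and follows essentially the same approach as the paper, which simply cites Propositions \ref{SupBoundsForAlphaInv}, \ref{UniformSupremumBounds}, \ref{lpestimate}, \ref{BoundOnInfAndOsc}, and \ref{MetricsBound}; you have merely unpacked the logical dependencies among them (in particular the passage through convergence for $(1)\Rightarrow(5)$ and the $I$--$J$ comparison for $(3)\Leftrightarrow(4)$) more explicitly than the paper does.
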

\hfill

\begin{proof}
	The statement follows from \ref{SupBoundsForAlphaInv}, \ref{UniformSupremumBounds}, \ref{lpestimate}, \ref{BoundOnInfAndOsc}, and \ref{MetricsBound}. 
\end{proof}

\hfill

The following corollary is trivial, but it is worth to mention as a sort of counterpart to \cite[Proposition 2.7]{CHT17}.

\hfill

\begin{corollary}
	The inverse Monge-Ampere flow converges if there is a uniform constant $A>1$, such that the positively curved metrics $H(t):=e^\rho \o^n_\vp$ on $-K_X$ satisfy the following inequality:
	\bea
	A^{-1} H(0) \leq H(t) \leq AH(0).
	\eea
\end{corollary}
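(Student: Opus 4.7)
The plan is to reduce this corollary to part (7) of Theorem 1.1 (equivalently, Proposition \ref{BoundOnInfAndOsc}), which guarantees convergence once $\mathrm{osc}(\vp)$ is uniformly bounded. The key observation is that the hypothesis on $H(t)$ is essentially a restatement of an oscillation bound on $\vp$, thanks to the Ricci potential identity \ref{RicciPotentialIdentity}.

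First I would rewrite $H(t) = e^\rho\o^n_\vp$ using \ref{RicciPotentialIdentity}, which along the flow gives
\begin{equation*}
H(t) = e^{\rho}\o^n_\vp = e^{-\vp + \rho_0 + c(t)}\o^n_0 = e^{-\vp + c(t)} H(0).
\end{equation*}
The assumption $A^{-1} H(0) \leq H(t) \leq A H(0)$ therefore reads $A^{-1} \leq e^{-\vp + c(t)} \leq A$ pointwise on $X$, i.e.
\begin{equation*}
c(t) - \log A \leq \vp \leq c(t) + \log A.
\end{equation*}
In particular, $\mathrm{osc}(\vp) = \sup_X \vp - \inf_X \vp \leq 2 \log A$ uniformly in $t$.

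Once this oscillation bound is in hand, Proposition \ref{BoundOnInfAndOsc} immediately yields the convergence of the inverse Monge-Ampere flow to a twisted K\"ahler-Einstein metric. There is essentially no obstacle here: the only content is the algebraic manipulation using the Ricci potential identity to translate the metric bound on $-K_X$ into the oscillation bound on the potential, which is the substantive condition (7) of Theorem \ref{TheoremOne}. This also explains the remark that the corollary is ``trivial'' compared with \cite[Proposition 2.7]{CHT17}: there the two-sided bound $A^{-1}\o_0 \leq \o_\vp \leq A \o_0$ is a bound on a $(1,1)$-form and gives control of all derivatives via elliptic theory, whereas here we only control the top wedge power, which is enough to bound $\vp$ up to an additive constant but nothing more, and the remaining convergence is then supplied by the pluripotential machinery already set up in Section \ref{SectionConvergence}.
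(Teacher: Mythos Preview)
Your proof is correct and uses the same key identity as the paper: from \ref{RicciPotentialIdentity} one gets $H(t)/H(0)=e^{-\vp+c(t)}$, turning the metric hypothesis into a two-sided pointwise bound on $-\vp+c(t)$. The only difference is cosmetic: the paper reads off from this an upper bound on $\sup_X\vp$ (using that $c(t)\leq c(0)$ along the flow) and invokes condition (1) of \ref{TheoremOne}, whereas you subtract the upper and lower bounds to get $\mathrm{osc}(\vp)\leq 2\log A$ and invoke condition (7); your route is marginally cleaner in that it avoids any appeal to the monotonicity of $c(t)$.
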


\hfill

\begin{proof}
	Note that $H^{-1}(0)H(t) = e^{-\vp + c(t)}$. Hence, the uniform bound on $-\vp + c(t)$ implies the bound on $\sup_X \vp$. 
\end{proof}

\section{Nadel multiplier ideal sheaves along the inverse Monge-Ampere flow.}\label{SectionNadel}

Let $(X,\o_0)$ be a Fano manifold, $\o_0 \in c_1(X)$ and $\vp=\vp(t)$ evolves along the inverse $MA^{-1}$ flow. In this section we show, that if $X$ does not admit a KE metric in $c_1(X)$, then the inverse Monge-Ampere flow produces a Nadel multiplier ideal sheaf.

First of all, we are going to recall the definition of a multiplier ideal sheaf (see \cite[Theorem and Definition 4.1]{DemaillyKollar2001}):

\hfill

\begin{definition}\label{MultiplierSheafDefinition}
	If $\vp$ is a psh function on a complex manifold $X$, the multiplier ideal sheaf $\cI(\vp) \subset \cO_X$ is defined by
	\bea
	\Gamma(U, \cI(\vp)) = \{f \in \cO_X(U) ~ \vline ~ |f|^2 e^{-2\vp} \in L^1_{loc}(U) \}
	\eea
	for every open set $U \subset X$. Then $\cI(\vp)$ is a coherent ideal sheaf in $\cO_X$. 
\end{definition}

\hfill

Next, we will need the Nadel vanishing theorem from \cite{Nadel1990}. The version here is a version of Nadel vanishing, formulated in \cite[Theorem 6.5]{DemaillyKollar2001}.

\hfill

\begin{theorem}\label{NadelVanishing}
	Let $(X, \o)$ be a Kähler orbifold and let $L$ be a	holomorphic orbifold line bundle over $X$ equipped with a singular hermitian metric $H = H_0 e^{-\vp}$ of weight $\vp$ with respect to a smooth metric $H_0$. Assume that the curvature $F_H$ is positive definite in the sense of currents, i.e. for some $\ve>0$, such that $F_H = -\sqrt{-1}\p\bar{\p}\log H \geq \ve \o$ as currents. If
	$K_X \otimes L$ is an invertible sheaf on X, we have
	\bea
	H^q(X,K_X \otimes L \otimes \cI(\vp)) = 0, ~~ \forall q \geq 1.
	\eea
\end{theorem}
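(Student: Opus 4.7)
The plan is to apply the standard $L^2$ theory with singular weights in the H\"ormander--Andreotti--Vesentini--Demailly scheme. First I would exhibit a fine resolution of $K_X \otimes L \otimes \cI(\vp)$ by the sheaves $\mathcal{F}^q$ whose sections over an open set $U$ are $(n,q)$-forms with values in $L$ whose coefficients lie in $L^2_{\mathrm{loc}}$ with respect to the singular metric $H = H_0 e^{-\vp}$. The exactness of the complex $0 \to K_X \otimes L \otimes \cI(\vp) \to \mathcal{F}^0 \to \mathcal{F}^1 \to \cdots$ at the sheaf level reduces to H\"ormander's local $L^2$ estimate: on a small pseudoconvex coordinate chart, any $\bar{\p}$-closed $(n,q)$-form $u$ with $\int |u|^2 e^{-2\vp} < \infty$ admits $v$ with $\bar{\p} v = u$ and a matching weighted $L^2$ bound. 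Since each $\mathcal{F}^q$ is a $\mathcal{C}^\infty_X$-module and hence fine, the resolution is acyclic, and the theorem reduces to the vanishing of the Dolbeault cohomology of weighted global sections in bidegree $(n,q)$ for $q \geq 1$.

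The second step is global $L^2$-solvability of $\bar{\p}$ on $X$. The Bochner--Kodaira--Nakano identity for the K\"ahler metric $\o$, applied to $L$-valued $(n,q)$-forms with $q \geq 1$, gives
\be
\|\bar{\p} u\|^2 + \|\bar{\p}^* u\|^2 \geq \int_X \langle [\sqrt{-1} F_H, \Lambda_\o] u, u \rangle_H \, \o^n.
\ee
Under the hypothesis $F_H \geq \ve \, \o$, the eigenvalues of the commutator $[\sqrt{-1}F_H, \Lambda_\o]$ acting on $(n,q)$-forms are bounded below by $q\ve$, which gives a coercive a priori estimate. Andreotti--Vesentini's abstract $L^2$ existence argument then produces, for each $\bar{\p}$-closed $u$ of bidegree $(n,q)$ in the weighted space, a solution $v$ with $\bar{\p} v = u$ and $\|v\|^2 \leq (q\ve)^{-1}\|u\|^2$ in the weighted norm, which is precisely the desired vanishing.

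The main obstacle is that $\vp$ is only quasi-psh and $H$ only singular, so smooth compactly supported forms are not a priori dense in the weighted $L^2$ space and the Bochner--Kodaira identity must be justified with care. I would handle this through Demailly's regularization by approximating $\vp$ by a decreasing family of quasi-psh potentials $\vp_\epsilon$ with analytic singularities satisfying $\sqrt{-1}\p\bar{\p}\vp_\epsilon \geq -C\epsilon \, \o$, applying the solvability step to each $\vp_\epsilon$ with effective curvature at least $(\ve - C\epsilon)\o$, and then passing to the limit using weak compactness in $L^2$, Fatou's lemma, and the monotonicity of multiplier ideals under decreasing limits of weights. The orbifold hypothesis is absorbed by working in local uniformizing charts with invariant forms and averaging over the isotropy group; the analytic estimates transfer without change.
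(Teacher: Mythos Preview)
Your outline is the standard $L^2$ proof of Nadel vanishing and is essentially correct in spirit, but there is nothing to compare it against: the paper does not prove this statement. It is quoted verbatim as the version of Nadel's theorem given in \cite[Theorem 6.5]{DemaillyKollar2001} and used as a black box in the proof of \ref{NadelForInverseMA}. So your proposal is not ``the same approach as the paper''; it is simply a sketch of the classical proof of a result the paper takes for granted.

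If you intend to include a proof, your sketch is on the right track. One small point worth tightening: the passage to the limit through Demailly's regularization is more delicate than you indicate, because the $L^2$ solutions $v_\epsilon$ for the approximating weights need not converge to a solution for the limiting weight without an additional uniform bound; the cleanest route is to work directly with the singular metric and invoke the completeness of the weighted $L^2$ space together with the density results for forms with coefficients in $L^2(e^{-\vp})$, as in Demailly's original treatment. Otherwise the argument is fine.
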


\hfill

Finally, we recall the following existence criterion for K\"ahler-Einstein metrics on Fano manifolds from \cite[Theorem 0.1]{Nadel1990} in the form, stated in \cite[Theorem 6.4]{DemaillyKollar2001}.

\hfill

\begin{theorem}\label{ExistenceCriterionForKE}
	Let $X$ be a Fano orbifold of dimension $n$. Let $G$ be a compact subgroup of the group of complex automorphisms of $X$. Then $X$ admits a $G$-invariant Kähler–Einstein metric, unless $-K_X$ possesses a $G$-invariant singular
	hermitian metric $H = H_0 e^{-\vp}$ ($H_0$ being a smooth $G$-invariant metric and $\vp$ is a $G$-invariant
	function in $L^1_{loc}(X))$, such that the following properties occur:
	\begin{enumerate}
		\item The curvature current $F_H= -\sqrt{-1}\p \bar{\p}\log H$ of $H$ is semipositive, i.e. 
		
		\bea
		F_H= -\sqrt{-1}\p \bar{\p}\log H = -\sqrt{-1}\p \bar{\p}\log H_0 +  -\sqrt{-1}\p \bar{\p}\vp \geq 0;
		\eea
		
		\item For every $\a > {n \over n+1}$, the multiplier ideal sheaf $\cI(\a\vp)$ is nontrivial, (i.e. $0 \neq = \cI(\a\vp) \neq
		\cO_X$ ).
	\end{enumerate}
\end{theorem}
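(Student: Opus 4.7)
My plan is to prove the statement by contrapositive, following Nadel's original approach via the Aubin continuity method. Assume no $G$-invariant K\"ahler-Einstein metric exists on $X$; the goal is to construct a $G$-invariant weight $\vp$ with the stated properties.

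The first step is to run the continuity method for the family
\begin{equation*}
(\o_0 + \sqrt{-1}\p\bar\p\vp_t)^n = e^{\rho_0 - t\vp_t}\o_0^n, \qquad t \in [0,1],
\end{equation*}
whose solution at $t=1$ would be the KE potential. By Yau's theorem and the implicit function theorem, the set $S \subset [0,1]$ of solvable $t$ is nonempty and open; averaging over the compact group $G$ produces $G$-invariant solutions. Since closedness of $S$ reduces to a $C^0$ estimate and no KE metric exists, $t_* = \sup S$ is not attained at $1$; so I would pick $t_k \nearrow t_*$, and after the normalization $\sup_X \vp_{t_k} = 0$, deduce $-\inf_X \vp_{t_k} \to \i$. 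Compactness of the family $\{\phi \in PSH(X,\o_0) : \sup_X \phi = 0\}$ in $L^1$ yields a subsequential limit $\vp_{t_k} \to \vp_\i$ that is $G$-invariant and $\o_0$-PSH; this is the weight, and property (1) follows immediately from $\o_0 + \sqrt{-1}\p\bar\p\vp_\i \geq 0$ in the sense of currents.

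The heart of the matter is property (2): nontriviality of $\cI(\a\vp_\i)$ for every $\a > n/(n+1)$. For the properness $\cI(\a\vp_\i) \neq \cO_X$, I would argue by contradiction: if $e^{-\a\vp_{t_k}}$ were uniformly bounded in $L^1$ for some such $\a$, a Kolodziej/Moser iteration applied to the Monge-Amp\`ere equation would give a uniform $C^0$ bound on $\vp_{t_k}$, contradicting $\inf_X \vp_{t_k} \to -\i$; then Demailly-Koll\'ar effective semicontinuity promotes the failure of integrability to the limit, producing a proper ideal at some point. For non-vanishing $\cI(\a\vp_\i) \neq 0$, the Monge-Amp\`ere equation furnishes a uniform $L^1$ bound on $e^{-t_k\vp_{t_k}}$ from $\int_X \o_{\vp_{t_k}}^n = V$, and Skoda's integrability theorem applied near a point with sufficiently small Lelong number of $\vp_\i$ yields local integrability of $e^{-\a\vp_\i}$ for $\a$ arbitrarily large in a small enough neighborhood.

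The hard part will be arranging the effective Demailly-Koll\'ar semicontinuity so that properness survives \emph{uniformly} for every $\a > n/(n+1)$ simultaneously, rather than value by value; this is the delicate ingredient in Nadel's argument, made possible by the sharp numerical threshold $n/(n+1)$ coming from Tian's $\a$-invariant and the MA equation. The $G$-invariance of both the weight and the resulting sheaf is automatic from the $G$-equivariance of the continuity path and the uniqueness of the normalized limit.
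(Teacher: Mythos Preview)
The paper does not prove this theorem at all: it is stated as a known result, explicitly quoted from \cite[Theorem 0.1]{Nadel1990} in the formulation of \cite[Theorem 6.4]{DemaillyKollar2001}, and is used only as background for the Nadel vanishing framework in Section~\ref{SectionNadel}. So there is no ``paper's own proof'' to compare against.

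Your sketch is essentially the classical Nadel/Demailly--Koll\'ar argument via the Aubin--Yau continuity path, and the architecture is correct: openness by Yau plus the implicit function theorem, failure of closedness forcing $\sup_X\vp_{t_k}-\inf_X\vp_{t_k}\to\infty$, $L^1$-compactness of sup-normalized $\o_0$-psh functions giving a $G$-invariant limit $\vp_\infty$, and then the two-sided nontriviality of $\cI(\a\vp_\infty)$. Two points deserve tightening. First, the step ``if $e^{-\a\vp_{t_k}}$ were uniformly $L^1$ then Kolodziej/Moser iteration gives a $C^0$ bound'' is where the threshold $n/(n+1)$ actually enters, and your sketch hides this: the precise mechanism (as in Tian and Demailly--Koll\'ar) is that a uniform bound on $\int_X e^{-\a\vp_t}\o_0^n$ for some $\a>n/(n+1)$ combines with the Monge--Amp\`ere equation and H\"older's inequality to bound $\int_X e^{-t\vp_t}\o_0^n$ (since $t\leq 1$), and then a Moser-type iteration or Tian's estimate yields $\mathrm{osc}(\vp_t)\leq C$. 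You should make this dependence on $\a>n/(n+1)$ explicit rather than invoking Kolodziej generically. Second, the ``uniformly for every $\a>n/(n+1)$'' concern you flag at the end is not actually an issue: the argument is run separately for each fixed $\a$, and the same limit $\vp_\infty$ works for all of them because the $C^0$ blow-up is independent of $\a$; effective semicontinuity (Demailly--Koll\'ar or Phong--Sturm) then transfers non-integrability to the limit value by value.
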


\hfill

Now we are in position to formulate the main result of this section.

\hfill

\begin{theorem}\label{NadelForInverseMA}
	(=\ref{TheoremTwo})
	Let $(X,\o_0)$ be a compact K\"ahler manifold, that satisfies all the assumptions of \ref{TheoremOne}, and does not admit a twisted K\"ahler-Einstein metric. Let $\a >{n \over n+1}$. Then there is a sequence of times $\{t_j\}$, and a sequence of solutions $\{\vp_{t_j}\}$ to the inverse Monge-Ampere flow, such that $\psi_j:= \vp_j - {1 \over V}\int_X \vp_j \o^n_{\vp_j}$ converge in $L^1(X, \o^n_0)$ to $\psi_\i$, and $\cI(\a\psi_\i)$ is a proper multiplier ideal sheaf, and

\bea
H^q(X,-\lfloor \a \rfloor K_X \otimes \cI(\a\psi_\i)) = 0, ~~ \forall q\geq 1.
\eea
\end{theorem}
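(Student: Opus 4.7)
The plan is to extract a compact family of normalized flow potentials whose exponential integrals diverge via the alpha-invariant bound of \ref{SupBoundsForAlphaInv}, use Demailly--Koll\'ar semicontinuity to conclude that the limiting multiplier ideal is proper, and then apply Nadel vanishing to a suitable singular metric on a power of $-K_X$. To set up the subsequence, note that by \ref{TheoremOne} the failure of twisted K\"ahler-Einstein forces $\sup_X \vp_t$ to be unbounded, so pick times $t_j \to +\i$ with $\sup_X \vp_{t_j} \to +\i$. Write $\tilde\vp_j := \vp_{t_j} - \sup_X \vp_{t_j}$ and $\psi_j := \vp_{t_j} - \frac{1}{V}\int_X \vp_{t_j}\o^n_{\vp_{t_j}}$; then $\psi_j = \tilde\vp_j + d_j$ for $d_j := -\frac{1}{V}\int_X \tilde\vp_j \o^n_{\tilde\vp_j} \geq 0$, and \ref{SupBounds} applied to $\tilde\vp_j$ (which satisfies $\sup_X \tilde\vp_j = 0$) yields $d_j \leq C$. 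Hence $\sup_X \psi_j = d_j \in [0,C]$, the family $\{\psi_j\} \subset \mathrm{PSH}(X,\o_0)$ has uniformly bounded supremum, and a subsequence converges in $L^1(X, \o^n_0)$ to some $\psi_\i \in \mathrm{PSH}(X,\o_0)$ with $\psi_\i \not\equiv -\i$.

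The next step is the divergence of the exponential integrals. Rearranging \ref{SupBoundsForAlphaInv} gives $\log \frac{1}{V}\int_X e^{-\a \tilde\vp_j}\o^n_0 \geq ((n+1)\a - n)\sup_X \vp_{t_j} - C$, and since $\a > n/(n+1)$ the right-hand side tends to $+\i$. As $\psi_j - \tilde\vp_j = d_j \in [0,C]$, the same divergence holds for $\frac{1}{V}\int_X e^{-\a\psi_j}\o^n_0$. If one had $\cI(\a\psi_\i) = \cO_X$, then $e^{-\a\psi_\i} \in L^1_{loc}(X)$, and the effective Demailly--Koll\'ar semicontinuity theorem applied to the $L^1$-convergent sequence $\psi_j \to \psi_\i$ would force $\int_X e^{-\a\psi_j}\o^n_0 \to \int_X e^{-\a\psi_\i}\o^n_0 < \i$, a contradiction. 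Hence $\cI(\a\psi_\i)$ is a proper ideal sheaf; it is nonzero because $\psi_\i$ is locally bounded on a nonempty open set.

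For the vanishing I would equip the line bundle $L := -(\lfloor\a\rfloor + 1)K_X$ with the singular Hermitian metric $h := h_0^{\lfloor\a\rfloor+1} e^{-\a\psi_\i}$, where $h_0$ is a smooth Hermitian metric on $-K_X$ of curvature $\o_0 + \b$ (available because $c_1(-K_X) = [\o_0] + [\b]$). The curvature current of $h$ equals
\[ F_h = \a\o_{\psi_\i} + (\lfloor\a\rfloor + 1 - \a)\o_0 + (\lfloor\a\rfloor + 1)\b \geq (\lfloor\a\rfloor + 1 - \a)\o_0, \]
which is strictly positive as a current since $\lfloor\a\rfloor + 1 > \a$; the multiplier ideal associated to the weight of $h$ coincides with $\cI(\a\psi_\i)$ because $h_0^{\lfloor\a\rfloor+1}$ is smooth. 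Invoking \ref{NadelVanishing} and using $K_X \otimes L = -\lfloor\a\rfloor K_X$ yields $H^q(X, -\lfloor\a\rfloor K_X \otimes \cI(\a\psi_\i)) = 0$ for all $q \geq 1$, as required. The main obstacle is the compactness step in the first paragraph: making sure the normalization by the Monge--Amp\`ere average keeps $\sup_X \psi_j$ bounded even though both $\sup_X \vp_j$ and $\frac{1}{V}\int_X \vp_j \o^n_{\vp_j}$ individually diverge, which is precisely where the Moser-type inequality \ref{SupBounds} is essential; after that the argument is a relatively mechanical combination of the alpha-invariant bound from Section \ref{SectionAlphaInvariant} with standard pluripotential tools.
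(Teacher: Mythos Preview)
Your overall strategy matches the paper's: extract a compact family of normalized potentials, use \ref{SupBoundsForAlphaInv} to force divergence of the exponential integrals, invoke Demailly--Koll\'ar semicontinuity, and finish with Nadel vanishing. Your construction of the singular metric on $-(\lfloor\a\rfloor+1)K_X$ is in fact more explicit than the paper's, which simply cites \ref{NadelVanishing} without writing it out.

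There is, however, a genuine gap in your compactness step. You claim that \ref{SupBounds} applied to $\tilde\vp_j$ gives $d_j = -\frac{1}{V}\int_X \tilde\vp_j\,\o^n_{\tilde\vp_j} \leq C$, but this is false. The constant in \ref{SupBounds} is $-(n+1)\mathrm{E}$ of the potential; while $\mathrm{E}(\vp_j)=\mathrm{E}(\vp_0)$ is fixed along the flow, $\mathrm{E}(\tilde\vp_j)=\mathrm{E}(\vp_0)-\sup_X\vp_j\to -\i$, so the ``constant'' you obtain blows up. Concretely, $d_j=\sup_X\vp_j-\frac{1}{V}\int_X\vp_j\,\o^n_{\vp_j}$, and using $\frac{1}{V}\int_X\vp_j\,\o^n_{\vp_j}=\frac{1}{V}\int_X\vp_j\,\o^n_0-I(\vp_j)$ together with the constancy of $\mathrm{E}$ one sees that $d_j$ differs from $I(\vp_j)$ by a bounded quantity; by \ref{TheoremOne} this is \emph{unbounded} precisely because no twisted KE exists. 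This breaks both your compactness claim for $\{\psi_j\}$ and the transfer of divergence from $\int_X e^{-\a\tilde\vp_j}\o^n_0$ to $\int_X e^{-\a\psi_j}\o^n_0$ (since $e^{-\a\psi_j}=e^{-\a d_j}e^{-\a\tilde\vp_j}$ and $e^{-\a d_j}\to 0$).

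The paper's own proof avoids this by silently switching from the $\o^n_{\vp_j}$-average in the statement to the $\o^n_0$-average: with $\psi_j=\vp_j-\frac{1}{V}\int_X\vp_j\,\o^n_0$ one has $\sup_X\psi_j\in[0,C(X,\o_0)]$ by the standard inequality $\sup_X\vp\leq\frac{1}{V}\int_X\vp\,\o^n_0+C(X,\o_0)$ for $\o_0$-psh functions, and the rest of your argument then goes through verbatim. So the fix is easy, but as written the first paragraph does not work with the normalization you (and the theorem statement) use.
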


\begin{proof}
	The proof goes along the lines to the proof of the similar statement for the K\"ahler-Ricci flow in \cite{Rub2009}.
	
	Let $\vp$ be a solution of the inverse Monge-Ampere flow. If $X$ does not admit the KE metric in $c_1(X)$, then \ref{TheoremOne} guarantees that $||\vp||_{C^0(X)}$ and ${1 \over V}\int_X \vp \o^n_0$ are unbounded along the flow. Hence, the results from Section \ref{SectionAlphaInvariant} (see \ref{SupBoundsForAlphaInv}), guarantee that there is a sequence of times $t_j$, such that for $\vp_j:=\vp_{t_j}$ the following holds:
	
	\bea
	\lim_{j \to +\i}\int_X e^{-({n \over n+1} + {1 \over j})(\vp_j - {1 \over V}\int_X \vp_j \o^n_0))} \o^n_0=+\i.
	\eea
	
	Furthermore, the same is true for any $\a > {n/n+1}$ (maybe after taking a subsequence):
	
	\bea
	\lim_{j \to +\i}\int_X e^{-\a(\vp_j - {1 \over V}\int_X \vp_j \o^n_0))} \o^n_0=+\i.
	\eea
	
	The set of positive currents in a given cohomology class is compact it the weak topology, so we can find yet another subsequence (which we still denote by $\{t_j\}$), such that the functions $\psi_j:=\vp_j - {1 \over V}\int_X \vp_j \o^n_0 $ converge in $L^1(X, \o^n_0)$ (see \cite[Theorem 6.4, pp.549-550]{DemaillyKollar2001} for more details). The effective version of semicontinuity theorem (see \cite[Theorem 0.2.2]{DemaillyKollar2001}, \cite{PS2000}), says, that
	
	\bea
	\int_X e^{-\a\psi_\i} \o^n_0=+\i, ~~\a>{n \over n+1}.
	\eea
	
	Thus, the multiplier ideal sheaf $\cI(\a\psi_\i)$ is a proper ideal sheaf of $\cO_X$. The statement about cohomology is a consequence of the Nadel vanishing theorem.
\end{proof}

\section{The $L^\i$-estimate for the inverse Monge-Ampere flow and its applications.}\label{SectionSuffering}

In this section we prove linear lower bound on $\inf_X \vp$ under the assumption on an exponential bound on the volume form $\o^n_\vp$ along the flow. This answers to the question in \cite[Section 5.2]{Xia2019}. Before we proceed, we need the following version of $C^2$-bound for the solution of the inverse Monge-Ampere flow.

\hfill

\begin{proposition}\label{C2estimate}
	There are constants $C>0$ and $A>1$, such that
	\begin{equation*}
		\log\Tr_{\o_0}\o_\vp \leq C+ A(\vp - \inf_X \vp) +t -\inf_X\vp 
	\end{equation*}
\end{proposition}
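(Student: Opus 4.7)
My plan is to apply the \emph{spatial} maximum principle at each fixed time $t$ to the test quantity
\[
\tilde H(x) := \log \Tr_{\o_0}\o_\vp(x,t) - A\bigl(\vp(x,t) - \inf_X \vp(t)\bigr),
\]
where $A > 1$ is a constant to be chosen depending only on a lower bound for the holomorphic bisectional curvature of $\o_0$. Working pointwise in $t$ avoids the otherwise awkward time-derivative $\p_t \log \Tr_{\o_0}\o_\vp$ that a spacetime argument would require, at the cost of picking up explicit $t$-dependence from other a priori bounds.

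Let $x_0 \in X$ be a point where $\tilde H$ attains its spatial maximum; at $x_0$ we have $\Delta_\vp \tilde H(x_0) \leq 0$. Using the Aubin--Yau (Chern--Lu) inequality
\[
\Delta_\vp \log \Tr_{\o_0}\o_\vp \geq -B\, \Tr_\vp \o_0 - C_0,
\]
together with $\Delta_\vp \vp = n - \Tr_\vp \o_0$, the inequality at $x_0$ rearranges to
\[
(A - B)\, \Tr_\vp \o_0(x_0) \leq An + C_0.
\]
Choosing $A > B$ yields a uniform bound $\Tr_\vp \o_0(x_0) \leq C_1$.

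The next step is to convert this into a bound on $\Tr_{\o_0}\o_\vp(x_0)$. I would use the Newton-type inequality $\Tr_{\o_0}\o_\vp \leq n^{2-n}(\Tr_\vp \o_0)^{n-1}\cdot \o_\vp^n/\o_0^n$, combined with the Monge--Ampere identity \ref{RicciPotentialIdentity}, which gives $\log(\o_\vp^n/\o_0^n) = -\rho - \vp + \rho_0 + c(t)$, and with the lower bound $\rho \geq -t + c(t) + A_2$ from \ref{KnownEstimates}(2). Together these yield
\[
\log \Tr_{\o_0}\o_\vp(x_0) \leq C_2 + t - \vp(x_0).
\]
Evaluating $\tilde H$ at $x_0$ and using $\vp(x_0) \geq \inf_X \vp(t)$, we obtain $\tilde H(x_0) \leq C_2 + t - \inf_X \vp(t)$; since $x_0$ maximizes $\tilde H$ over $X$, this bound propagates to every $x \in X$, and rearrangement gives the stated estimate with $C := C_2$.

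The main technical issue is not the maximum principle argument itself but the explicit linear-in-$t$ term, which originates in the time-dependent lower bound $\rho \geq -t + c(t) + A_2$ for the Ricci potential. In the absence of Perelman-type estimates for the inverse Monge--Ampere flow (as emphasized in the introduction), no uniform upper bound on $-\rho$ is available, so this growth factor cannot be removed by the present method; the $-\inf_X \vp$ term on the right-hand side enters precisely to absorb the a priori unboundedness of $\vp$ from below.
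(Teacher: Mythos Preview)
There is a genuine gap in your maximum-principle step. The inequality you quote as ``Aubin--Yau (Chern--Lu)'',
\[
\Delta_\vp \log \Tr_{\o_0}\o_\vp \;\geq\; -B\, \Tr_\vp \o_0 - C_0,
\]
does not hold with a uniform constant $C_0$. The standard Aubin--Yau computation produces an additional term $\bigl(\Tr_{\o_0}\textrm{Ric}(\o_\vp)\bigr)/\Tr_{\o_0}\o_\vp$, equivalently $\bigl(R(\o_0)-\Delta_0\log(\o_\vp^n/\o_0^n)\bigr)/\Tr_{\o_0}\o_\vp$. Bounding this from below would require a lower bound on $\textrm{Ric}(\o_\vp)$, i.e.\ on $\sqrt{-1}\,\p\bar\p\rho$, and no such bound is available along the inverse Monge--Amp\`ere flow; the introduction even stresses that $\rho$ itself can be unbounded. (Already in complex dimension one your inequality reads $e^{-F}\Delta_0 F \geq -B e^{-F}-C_0$ with $F=\log(\o_\vp/\o_0)$, which fails since $\Delta_0 F$ has no a priori lower bound.) Consequently your conclusion $\Tr_\vp\o_0(x_0)\leq C_1$ at the spatial maximum is not justified.

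The paper does not attempt a self-contained maximum-principle argument here. It quotes \cite[Proposition~2.6]{CHT17} for
\[
\log\Tr_{\o_0}\o_\vp \;\leq\; C + A(\vp-\inf_X\vp) - \inf_X(\rho+\vp-c(t)),
\]
in which the Ricci-type contribution is already absorbed into the term $-\inf_X(\rho+\vp-c(t))$, and then bounds that term via $\inf_X(\rho+\vp-c(t))\geq -t+\inf_X\vp-A_2$ using \ref{KnownEstimates}(2). Your final step---feeding the lower bound $\rho\geq -t+c(t)+A_2$ into the volume-form identity to extract the explicit $t-\inf_X\vp$---is correct and matches the paper exactly. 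What is missing is a valid derivation of the $C^2$ inequality itself: to make your elliptic approach work you would have to include $-(\rho+\vp-c(t))$ (equivalently $\log(\o_\vp^n/\o_0^n)-\rho_0$) in the test function so that the Ricci term is cancelled, rather than use the bare quantity $\log\Tr_{\o_0}\o_\vp - A\vp$.
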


\hfill

\begin{proof}
	It was shown in \cite[Proposition 2.6]{CHT17}, that 
	\begin{equation*}
		\log\Tr_{\o_0}\o_\vp \leq C + A(\vp - \inf_X \vp) -\inf_X(\rho + \vp -c(t)).
	\end{equation*}
	However, 
	\begin{gather*}
		\inf_X(\rho + \vp -c(t)) \geq \inf_X \rho + \inf_X \vp - c(t) \geq\\ -t - A_2 + c(t) + \inf_X \vp - c(t) = -t + + \inf_X \vp
	\end{gather*}
	by \ref{KnownEstimates}.
\end{proof}

\hfill

How we are going to show the main result of this section.

\hfill

\begin{theorem}\label{InfEstimateAlongIMA}(=\ref{TheoremThree})
	Let $\vp$ be a solution to the inverse Monge-Ampere flow.
	Then there is a constant $M > 0$, such that 
	\bea
	||\vp||_{C^0} \leq M(t + 1)
	\eea
	along the inverse Monge-Ampere flow. 
\end{theorem}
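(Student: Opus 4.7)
The plan is to split $\|\vp\|_{C^0}=\max(\sup_X\vp,-\inf_X\vp)$ and handle each piece linearly in $t$. The upper half $\sup_X\vp\leq t+A_1$ is already the content of \ref{KnownEstimates}(1), so everything comes down to proving the linear lower bound $\inf_X\vp\geq -M(t+1)$. I would further decompose $-\inf_X\vp=-\sup_X\vp+\textrm{osc}(\vp)$ and control the two terms separately.

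For $-\sup_X\vp$, the pointwise inequality $e^{-\vp}\geq e^{-\sup_X\vp}$ applied under the integral defining $c(t)$ gives $c(t)\leq \sup_X\vp+C_1$, equivalently $\sup_X\vp\geq c(t)-C_1$. By \ref{FAndMabuchi} the energy $\textrm{E}(\vp)$ is constant along the flow, so $c(t)=\mathcal{F}(\vp)+\textrm{E}(\vp)$ differs from $\mathcal{F}(\vp)$ only by a constant; and $\mathcal{F}$ is convex along the flow by \ref{FConvex}, hence so is $c(t)$. The tangent inequality $c(t)\geq c(0)+t\dot{c}(0)$ together with $M_0:=-\dot{c}(0)\geq 0$ then yields the linear lower bound $\sup_X\vp\geq -M_0 t-C_2$.

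For $\textrm{osc}(\vp)$ I would apply a Kolodziej-type $L^\i$ estimate to the Monge-Amp\`ere equation $\o^n_\vp=e^G\o^n_0$ with $G=-\vp+\rho_0+c(t)-\rho$, evaluated on the normalisation $\psi:=\vp-\sup_X\vp$, which satisfies $\sup_X\psi=0$ and $\o_\psi=\o_\vp$. The hypothesis (indicated in the introduction) that $\o^n_\vp$ is exponentially bounded along the flow is precisely a bound $\|e^G\|_{L^p}\leq e^{M_1(t+1)}$ for some $p>1$. A sharpened form of the $L^\i$ estimate with \emph{logarithmic} dependence on the right-hand side -- for instance via the auxiliary Monge-Amp\`ere-equation method of Guo-Phong-Tong, or via $L^1(\log L)^{n+\ve}$ Orlicz norms in the spirit of Eyssidieux-Guedj-Zeriahi -- then yields $\textrm{osc}(\vp)=\|\psi\|_{L^\i}\leq C\bigl(1+\log\|e^G\|_{L^p}\bigr)\leq M_2(t+1)$. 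Combining the two bounds gives $-\inf_X\vp=-\sup_X\vp+\textrm{osc}(\vp)\leq M_0 t+C_2+M_2(t+1)\leq M(t+1)$, which together with $\sup_X\vp\leq t+A_1$ proves the theorem.

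The main obstacle is this last step. A naive use of Kolodziej's classical estimate only gives $\|\psi\|_{L^\i}\leq C\|f\|_{L^p}^{1/n}$, and under an exponential hypothesis on $\o^n_\vp$ this produces exponential rather than linear growth in $t$ for $\textrm{osc}(\vp)$. To push through one genuinely needs the sharper logarithmic version of the estimate, and one must verify that the exponential hypothesis on $\o^n_\vp$ is strong enough to control the weight $e^G$ in the Orlicz class the estimate requires, with constants that grow only linearly in $t$. The other ingredient, the use of convexity of $\mathcal{F}$ to push $\sup_X\vp$ down by at most a linear amount, is essentially a routine consequence of what has been established in Section \ref{SectionPrelim}.
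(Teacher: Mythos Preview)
Your decomposition and your handling of $\sup_X\vp\geq -M_0t-C_2$ via convexity of $c(t)$ match the paper; that is precisely how the paper's Step~3 closes. The divergence is entirely in the $\textrm{osc}(\vp)$ bound, and there your route through a ``logarithmic'' Kolodziej--type $L^\i$ estimate has the gap you yourself flag: an estimate of the form $\textrm{osc}(\vp)\leq C(1+\log\|e^G\|_{L^p})$ is not available. The Guo--Phong--Tong auxiliary--equation method controls $\textrm{osc}(\vp)$ through the entropy of $e^G$, not through $\log\|e^G\|_{L^p}$, and converting an exponential $L^p$ bound into an entropy bound only produces exponential entropy---you are back where you started. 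In fact the paper's proof never invokes the ``exponential bound on $\o_\vp^n$'' at all, so leaning on that hypothesis is a red herring.

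The paper instead runs a Moser iteration adapted to the flow. The flow-specific input is the $C^2$ estimate \ref{C2estimate}, which gives $\Tr_{\o_0}\o_\vp\leq Ce^{A(\vp-\inf_X\vp)+t-\inf_X\vp}$. Feeding this into the Sobolev inequality for $u=e^{-B\psi}$, $\psi=\vp-\sup_X\vp$, $B=(A+1)/(1-\d)$, and iterating produces $\|u\|_{C^0}^{1-\gamma}\leq Ce^{nt}\|u\|_1$ with $\gamma=(1-\d)n<1$. The crucial point is that $\log\|u\|_1$ is then controlled not by an $L^\i$ Monge--Amp\`ere estimate but by the \emph{Trudinger inequality} of \cite{GPT23} (their Theorem~7): $\log\big(\frac1V\int_X e^{-B\psi}\o_0^n\big)\leq \frac{C}{V}\int_X(-\psi)\o_\psi^n+C$. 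By \ref{SupBounds} and \ref{KnownEstimates} the right side is at most $(n+1)\sup_X\vp+C\leq(n+1)t+C$, linearly in $t$. Taking logarithms then gives $B\,\textrm{osc}(\vp)\leq C_3(t+1)$, and the proof finishes exactly as you describe. So the missing ingredient is to swap the black-box $L^\i$ estimate for a Moser scheme driven by \ref{C2estimate}, coupled with the Trudinger-type (not $L^\i$-type) inequality from \cite{GPT23}.
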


\hfill

\begin{proof}
	By \ref{KnownEstimates}, we know that $\sup_X \vp \leq t + A_1$. So, we only need to show the upper bound for $-\inf_X \vp$. Also, without loss of generality, we may assume that $\textrm{E}(\vp) = 0$ along the inverse Monge-Ampere flow. This implies that $\sup_X \vp \geq 0$.
	
	In the following, $C$ denotes a constant, that may vary from line to line, but depends only on the initial data and independent on $t$.
	
	\textbf{Step 1:}
	Let $B= {A+1 \over 1 - \d}$, where $A>1$ is the same as in the statement of \ref{C2estimate}, and $\d>0$ will be chosen later. Let $\psi = \vp -\sup_X \vp$, and $u = e^{-B\psi}$.
	
	The Sobolev inequality says that for any smooth function $f$ on $X$ we have
	\bea
	\Bigg(\int_X f^{2n \over n-1} \o^n_0\Bigg)^{n-1 \over n}
	\leq
	C_S\Bigg(\int_X |\na f|^2 \o^n_0 + \int_X f^2 \o^n_0\Bigg),
	\eea
	and $C_S$ depends only on $(X,\o_0)$. If we take $f = u^{p \over 2}$ for $p \geq 1$, we can estimate the gradient term in the Sobolev inequality as follows:
	
	\begin{gather*}
		\int_X |\na e^{{-pB\psi \over 2}}|^2 \o^n_0
		=
		{Bpn \over 2}\int_X e^{-pB\psi}\sqrt{-1}\p \bar{\p}\psi \wedge \o^{n-1}_0
		\\ 
		=
		{Bpn \over 2}\int_X e^{-pB\psi}\big(\o_\vp - \o_0\big) \wedge \o^{n-1}_0
		\\
		\leq
		{Bp \over 2}\int_X e^{-pB\psi}\Tr_{\o_0}\o_\vp \o^n_0
		\\
		\leq
		{CBp \over 2}\int_X e^{-pB\psi + A(\vp - \inf_X \vp) + t - \inf_X\vp}  \o^n_0.
	\end{gather*}
	
	Here, in the last inequality we used \ref{C2estimate}.
	
	Adding, and subtracting $\sup_X \vp$, we obtain the following:
	
	\begin{equation}
		A(\vp - \inf_X \vp) + t - \inf_X\vp \leq A(\psi - \inf_X \psi) - \inf_X\psi + t + C \leq -(A+1)\inf_X \psi + t + C.
	\end{equation}
	Here $C$ is a constant, and the last inequality follows from $A\psi = A(\vp - \sup_X\vp) \leq 0$. Hence,
	
	\begin{equation}
		\int_X |\na e^{{-pB\psi \over 2}}|^2 \o^n_0 
		\leq 
		C_0pe^t e^{-(A+1)\inf_X \psi}\int_X e^{-pB\psi}  \o^n_0
		\leq C_0pe^t||u||^{1 - \d}||u||^p_p.
	\end{equation}
	
	Thus, if we set $\tau = {n \over n-1}$, then we obtain the following
	
	\begin{equation}\label{AlmostWeinkove}
		||u||_{p\tau} \leq (C_0p)^{1/p}e^{t/p}||u||^{{1 - \d \over p}}_{C^0}||u||_p.
	\end{equation}
	
\textbf{Step 2:}

Set $p_k = p\tau^k$ and $a(k) = 1 + {1 \over \tau} + \dots + {1 \over \tau^k}$. By iteration, we get
\begin{equation}
	||u||_{p_{k+1}} \leq C(k)e^{ta(k)/p}||u||^{(1 - \d)a(k)/p}_{C^0}||u||_p,
\end{equation}
where $C(k) = (Cp)^{a(k)/p}$. As $k$ tends to infinity, the limit $\lim_{k \to +\i}a(k) = {1 \over 1 - 1/\tau} = n$ is finite. Taking the limit, we obtain
\begin{equation}
	||u||_{C^0} \leq C_1 e^{tn/p}||u||^{(1 - \d)n/p}_{C^0}||u||_p.
\end{equation}

If we take $p=1$ and take $\d$ such that $\gamma = (1 - \d)n <1$, then 
\begin{equation}
	||u||^{1-\gamma}_{C^0} \leq C_1 e^{tn}||u||_1.
\end{equation}

We cannot bound $||u||_1$ by $\a$-invariant anymore, as it is done in \cite{W0306}. Instead, we invoke the Trudinger inequality from \cite[Theorem 7]{GPT23}. The Trudinger inequality implies that
\begin{equation}
	\log||u||_1 = \log\Bigg({1 \over V}\int_X e^{-B\psi}\o^n_0\Bigg)\leq {C \over V}\int_X (-\psi)\o^n_\psi + C
\end{equation}
for some uniform constant $C$, which depends only on $B$ and the initial data. Since $\psi = \vp - \sup_X\vp$, and $\vp$ is a solution to the inverse Monge-Ampere flow, \ref{SupBounds} together with \ref{KnownEstimates} imply that
\begin{equation}
	{1 \over V}\int_X (-\psi)\o^n_\psi \leq (n+1)\sup_X\vp + C \leq (n+1)t + C.
\end{equation}
Hence, there are $C_2,C_3>0$, such that $||u||^2_2 \leq e^{C_2(t+1)}$, and
\begin{equation}\label{TheEstimate}
	||u||_{C^0} \leq C^{{1 \over 1-\gamma}}_1e^{C_3(t+1)}.
\end{equation}

\textbf{Step 3:}

Taking the logarithm in inequality \ref{TheEstimate}, we obtain the following:
\bea
B(\sup_X \vp - \inf\vp) \leq C_3(t+1) + {1 \over 1- \gamma }\log C_1. 
\eea

By \ref{KnownEstimates}, we know that $t+ A_1 \geq \sup_X \vp \geq \dot{c}(0)t + c(0) + \inf_X \rho_0$, and $-\dot{c}(0) = {1 \over V}\int_X (e^\rho_0 - 1)^2 \o^n_0 \geq 0$. Hence 

\bea
-\inf_X \vp \leq M(t+1).
\eea

\end{proof}

\hfill

An immediate consequence of this theorem is a local bound of asymptotic geodesic rays, constructed in \cite{Xia2019} and \cite{His2019}. We briefly recall the idea. We use the notation of \cite{His2019}, and denote by $\vp_t$ the solution to the inverse Monge-Ampere flow.

Pick sequence of times $t_j \to \i$, and suppose that $\lim_{j \to +\i}\sup_X \vp_{t_j} = +\i$. We construct the geodesic segments $\vp^{t_j}_j$, which connect $\vp_0$ with $\vp_{t_j}$, and parametrize them by the time $t \in [0,t_j]$. By \cite[Theorem 3.4]{DarvasGeodRays}, together with \ref{TheoremThree}, the geodesic segments satisfy the following bounds
\bea
-C \leq\sup_X {\vp^a_j - \vp^b_j \over a - b} = \sup_X{\vp^{t_j}_j - \vp^0_j \over t_j} =  \sup_X{\vp_{t_j} - \vp_0 \over t_j}\leq C
\\
-C \leq \inf_X {\vp^a_j - \vp^b_j \over a - b} = \inf_X {\vp^{t_j}_j - \vp^0_j \over t_j} = \inf_X {\vp_{t_j} - \vp_0 \over t_j}\leq C
\eea

for some uniform constant $C>0$. 

Without loss of generality, we may assume that $\textrm{E}(\vp_t)= 0$, and $\vp_0 = 0$. Thus, by the discussion above and \cite[Theorem 1]{DarvasGeodRays}, we get

\bea
-C \leq\sup_X{\vp^{t}_j \over t} \leq C
\\
-C \leq  \inf_X {\vp^{t}_j  \over t} \leq C.
\eea

By \cite[Section 3.2]{His2019}, we know that for any finite time $T>0$ the entropy ${1 \over V}\int_X \log \Big({\o^n_\vp \over \o^n_0}\Big)\o^n_\vp$ is bounded. After a possible reparametrization of $\vp^t_j$ by its $d_2$ arc length, we can use \cite[Theorem 2.4]{DarvasHe}, and construct the geodesic ray $\vp^t$, as in  \cite[Theorem 3.2]{DarvasHe}, which is asymptotic to the flow in a sense that for any $p \geq 1$ we have $\lim_{j \to +\i}d_p(\vp^t_j, \vp^t) = 0$. Because $\vp^t$ is a bounded geodesic ray, we can use \cite[Proposition 2.1]{DarvasHe} to deduce, that $\vp^t$ is a geodesic for all $d_p$.

We established the first part of \ref{TheoremFour}, which is similar to \cite[Theorem 3.2 and Theorem 3.3]{DarvasHe} in the case of K\"ahler-Ricci flow.

\hfill

\begin{theorem}
	Assume that $\vp_t$ is a diverging trajectory of the inverse Monge-Ampere flow, which satisfies the assumptions of \ref{TheoremThree}. Then there exist a curve $\vp^t$, such that for any $p \geq 1$ it is nontrivial $d_p$ geodesic ray, weakly asymptotic to $\vp_t$. Moreover:
	\begin{itemize}
		\item[(1)] The functional $\mathcal{F}$ is convex and decreasing along the geodesic ray;
		
		\item[(2)] The normalized ray $\vp^t - \sup_X (\vp^t - \vp^0)$ converges to  $\vp^\i$, such that ${1 \over V}\int_X e^{-{n \over n+1}\vp^\i}\o^n_0 = +\i$.
	\end{itemize}
\end{theorem}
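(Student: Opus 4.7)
The construction of the geodesic ray $\vp^t$ and its weak $d_p$-asymptotic relation to $\vp_t$ are already established in the paragraphs preceding the statement, so only items (1) and (2) remain to be proved.

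For the convexity in (1), the plan is to decompose $\mathcal{F}(\vp) = c(\vp) - \textrm{E}(\vp)$ where $c(\vp) = -\log\bigl(\tfrac{1}{V}\int_X e^{\rho_0 - \vp}\o^n_0\bigr)$. Along any bounded $d_1$-geodesic, $\textrm{E}$ is affine (a classical result of Berndtsson), and the (twisted) Ding-type term $c$ is convex by Berndtsson's subharmonicity theorem for weighted Bergman kernels; together these give convexity of $\mathcal{F}$ along $\vp^t$. For monotonicity, I would use that the flow is the gradient flow of $\mathcal{F}$, so by \ref{FAndMabuchi} one has $\mathcal{F}(\vp_{t_j}) \leq \mathcal{F}(\vp_0)$. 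Convexity of $\mathcal{F}$ along the approximating segments $\vp^{t_j}_j$ yields $\mathcal{F}(\vp^t_j) \leq \mathcal{F}(\vp_0)$ for all $t \in [0, t_j]$. Passing to the limit using $d_1$-lower semicontinuity of $\mathcal{F}$ (which follows from $d_1$-lsc of both $c$ and $-\textrm{E}$), combined with convexity of the limiting ray, then forces $t \mapsto \mathcal{F}(\vp^t)$ to be non-increasing.

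For (2), set $u^t := \vp^t - \sup_X(\vp^t - \vp^0)$. Standard compactness of $\o_0$-psh functions with fixed sup yields a subsequential weak limit $\vp^\i$; to upgrade to unique convergence I would use that $t \mapsto \sup_X(\vp^t - \vp^0)$ is convex in $t$ (a consequence of the maximum principle applied to the geodesic equation), which makes $u^t$ monotone in the appropriate sense. The heart of the proof is the non-integrability of $e^{-\frac{n}{n+1}\vp^\i}$. To obtain it, the idea is to tie $\vp^\i$ back to the limit produced by \ref{NadelForInverseMA}: because $\vp^t$ is $d_p$-asymptotic to $\vp_t$ and the sup-normalization and the $\o^n_\vp$-average-normalization of any $\o_0$-psh function differ by a uniformly bounded constant, the sup-normalized limit of $\vp^t$ differs from the average-normalized Nadel limit $\psi_\i$ (from \ref{NadelForInverseMA}) only by a finite constant. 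This transfers the non-integrability from $\psi_\i$ to $\vp^\i$.

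The main obstacle is the last step: while \ref{NadelForInverseMA} provides non-integrability at each $\alpha > \tfrac{n}{n+1}$, the statement here demands it at the \emph{borderline} exponent $\tfrac{n}{n+1}$ itself. To cross this gap I would feed the linear bound $\|\vp\|_{C^0} \leq M(t+1)$ from \ref{InfEstimateAlongIMA} into the $\alpha$-invariant inequality of \ref{SupBoundsForAlphaInv}, letting $\alpha \downarrow \tfrac{n}{n+1}$ while extracting a diagonal subsequence and applying monotone convergence (or Fatou) to the integrals $\int_X e^{-\alpha u^t}\o^n_0$. The linear growth control on the sup is precisely what allows the critical exponent to be attained in the limit; everything else then assembles from Berndtsson's convexity results and the Nadel machinery of Section \ref{SectionNadel}.
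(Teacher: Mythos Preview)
Your treatment of (1) is fine and matches the paper's approach (the paper simply cites $d_2$-continuity of $\mathcal{F}$ and defers to \cite{DarvasHe}, which unfolds into exactly the Berndtsson convexity argument you sketch).

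Part (2), however, has a genuine gap. You try to identify the geodesic-ray limit $\vp^\i$ with the Nadel limit $\psi_\i$ from \ref{NadelForInverseMA}, arguing that ``$\vp^t$ is $d_p$-asymptotic to $\vp_t$''. But ``weakly asymptotic'' here means only that the \emph{approximating segments} $\vp^t_j$ converge to $\vp^t$ for each fixed $t$; it does \emph{not} say that the ray $\vp^t$ stays close to the flow trajectory $\vp_t$ as $t\to\i$. The object $\psi_\i$ is a subsequential $L^1$-limit of normalized \emph{flow} iterates, whereas $\vp^\i$ is the decreasing limit along the \emph{geodesic ray}; there is no mechanism in your argument linking the two, and in general they need not differ by a bounded constant. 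The paper avoids this entirely: it never invokes \ref{NadelForInverseMA}. Instead it transports the $\alpha$-invariant inequality of \ref{SupBoundsForAlphaInv} from the flow endpoint $\vp_{t_j}$ to every point of the segment $\vp^t_j$ by using Berndtsson's result that $t\mapsto -\log\int_X e^{-\alpha\vp^t_j+\rho_0}\o^n_0$ is convex along geodesics, obtaining
\[
-\log\Bigl(\tfrac{1}{V}\int_X e^{-\alpha(\vp^t-\sup_X\vp^t)+\rho_0}\o^n_0\Bigr)\leq -\ve\, d_1(0,\vp^t)+C
\]
directly on the limiting ray for every $\alpha\in\bigl(\tfrac{n}{n+1},1\bigr)$. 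Since $\vp^t-\sup_X\vp^t$ is decreasing, monotone convergence gives $\int_X e^{-\alpha\vp^\i}\o^n_0=+\i$ for all such $\alpha$.

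Your proposed fix for the borderline exponent is the second gap. Letting $\alpha\downarrow\tfrac{n}{n+1}$ along a diagonal subsequence and appealing to Fatou or monotone convergence does not work: divergence of $\int e^{-\alpha\vp^\i}$ for $\alpha>\tfrac{n}{n+1}$ does \emph{not} by soft arguments imply divergence at $\alpha=\tfrac{n}{n+1}$ (the integrability threshold could sit exactly there). The correct tool, which the paper invokes, is the \emph{openness conjecture} of Demailly--Koll\'ar, proved by Guan--Zhou \cite{GZhou} and Berndtsson \cite{Brn2}: if $\int_X e^{-\tfrac{n}{n+1}\vp^\i}\o^n_0$ were finite, openness would force $\int_X e^{-\alpha\vp^\i}\o^n_0<\i$ for some $\alpha>\tfrac{n}{n+1}$, contradicting what was just established. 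This deep input is precisely what bridges the gap you identified, and it is missing from your outline.
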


\hfill

\begin{proof}
	We already established the existence of asymptotic geodesic ray. Thus, we need to prove only (1) and (2).
	
	\begin{itemize}
		\item[(1)] Since $\lim_{j \to +\i}d_2(\vp^t_j,\vp^t) =0$, and $\mathcal{F}$ is continious with respect to $d_2$, the proof of this claim is the same, as \cite[Theorem 3.3]{DarvasHe};
		
		\item[(2)] By \ref{SupBoundsForAlphaInv}, and the property of $d_1$-distance along the inverse Monge-Ampere flow, we have
		\begin{gather}\label{DistanceAndAlpha}
			-\log \Bigg({1 \over V}\int_X e^{-\a(\vp_t - \sup_X \vp_t) + \rho_0} \o^n_0\Bigg)
			\leq
			\\
			-\log \Bigg({1 \over V}\int_X e^{-\a(\vp_t - \sup_X \vp_t)} \o^n_0\Bigg) + C_1
			\leq
			-((n+1)\a - n)Cd_1(0,\vp_t) + C_2.
		\end{gather}
		Now we pick $\a \in \big({n \over n+1};1\big)$.	Without loss of generality, we again assume that $\vp_0 = 0$, and $\textrm{E}(\vp_t) = 0$. Consider the geodesic segments $\vp^t_j$ as above, and assume it is naturally parametrized. The same reasoning, as in \cite[Theorem 3.2, part 2]{DarvasHe} shows, that 
		
		\begin{gather*}
			-\log \Bigg({1 \over V}\int_X e^{-\a(\vp^t_j - \sup_X \vp^t_j) + \rho_0} \o^n_0\Bigg) =
			-\log \Bigg({1 \over V}\int_X e^{-\a\vp^t_j + \rho_0} \o^n_0\Bigg) - \a \sup_X \vp^t_j
		\end{gather*}
	is convex as well. By \ref{DistanceAndAlpha}, we know, that
	\bea\label{Geee}
	-\log \Bigg({1 \over V}\int_X e^{-\a(\vp^t_j - \sup_X \vp^t_j) + \rho_0} \o^n_0\Bigg) \leq - \ve d_1(0,\vp^t_{j}) + C
	\eea
	for all $t$. Indeed, if $d_2(0,\vp_{t_j}) = t$, then $d_1(0,\vp_{t_j}) = \ve_j t$ for some $\ve_j > 0$, and the claim follows from the fact, that for any $\a \in \big({n \over n+1}; 1\big)$ the integral in \ref{Geee} is a convex function of $t$ (see \cite{Brn1,DarvasHe}). Since $\lim_{j \to +\i}d_p(\vp^t_j, \vp^t) = 0$, and the integral on the left hand side of \ref{Geee} is continuous, we obtain
	\bea
	-\log \Bigg({1 \over V}\int_X e^{-\a(\vp^t - \sup_X \vp^t) + \rho_0} \o^n_0\Bigg) \leq - \ve d_1(0,\vp^t) + C.
	\eea
	As in \cite[Introduction]{DarvasGeodRays} and \cite[Theorem 2.6]{DarvasHe}, the function $\vp^t - \sup_X \vp^t$ is decreasing in $t$ due to our choice of normalization. Hence, there exists $\vp^\i = \lim_{t \to +\i}(\vp^t - \sup_X \vp^t)$, which is not identically $-\i$. Again, as in \cite{DarvasHe}, the proof of the openness conjecture (see \cite{GZhou, Brn2}) implies that ${1 \over V}\int_X e^{-{n \over n+1}\vp^\i}\o^n_0 = +\i$.
	\end{itemize}
	
\end{proof}

\hfill

\noindent Department of Mathematics, Columbia University, New York, NY 10027 USA

\noindent nk2957@columbia.edu

\noindent nklemyatin@math.columbia.edu

\end{document}